\DeclareMathOperator{\im}{Im}
\DeclareMathOperator{\adj}{adj}
\DeclareMathOperator{\supp}{supp}
\newtheorem{theorem}{Theorem}[section]
\newtheorem{lemma}[theorem]{Lemma}
\newtheorem{proposition}[theorem]{Proposition}
\theoremstyle{remark}\newtheorem{remark}[theorem]{Remark}
\numberwithin{equation}{section}
\title[Vertical projections in the Heisenberg group]{An a.e.~lower bound for Hausdorff dimension under vertical projections in the Heisenberg group}
\author{Terence L.~J.~Harris}
\address{Department of Mathematics, University of Illinois, Urbana, IL 61801, U.S.A.}
\email{terence2@illinois.edu}
\subjclass[2010]{28A78; 28A80}
\keywords{Heisenberg group, Hausdorff dimension, vertical projections}
\thanks{This material is based upon work partially supported by the National Science Foundation under Grant No. DMS-1501041. I would like to thank Burak Erdoğan for advice, and for financial support. I also thank Jeremy Tyson for suggesting the use of Proposition \ref{covering}, which improved the final result.}
\begin{document}
\begin{abstract} An improved a.e.~ lower bound is given for Hausdorff dimension under vertical projections in the first Heisenberg group. 
\end{abstract}
\maketitle
\section{Introduction}  
The aim of this work is to improve the known a.e.~lower bounds for Hausdorff dimension under vertical projections in the Heisenberg group. The average behaviour of Hausdorff dimension under orthogonal projections in Euclidean space was first explored by Marstrand in 1954 \cite{marstrand}; many developments and generalisations have occurred since (see e.g.~\cite{mattila3,peres,falconer,balogh4,mattila4}). An effort began in \cite{balogh} and \cite{balogh2} towards understanding the behaviour of Hausdorff dimension under projections in the Heisenberg group, which was further developed in \cite{hovila} and \cite{fassler} (see also \cite{balogh5}). One important open problem that remains is determining the a.e.~behaviour of Hausdorff dimension under ``vertical projections''. 

All definitions relevant to this work will be restated here; further background is available in \cite{balogh,balogh2}. Let $\mathbb{H}$ be the first Heisenberg group, which as a set will be identified with $\mathbb{R}^3 = \mathbb{C} \times \mathbb{R}$. The assumed convention for the group law on $\mathbb{H}$ is
\begin{align*} (z,t) \ast (\zeta,\tau) &= \left(z+\zeta, t+ \tau + 2\im{\left(z\overline{\zeta} \right)} \right) \\
&= \left(z+\zeta, t+ \tau - 2 z \wedge \zeta \right),\end{align*}
where $\wedge: \mathbb{R}^2 \times \mathbb{R}^2 \to \mathbb{R}$ is the standard wedge product on $\mathbb{R}^2$, given by 
\[ (x_1,y_1) \wedge (x_2,y_2) = x_1y_2-x_2y_1. \]
 Define $\|(z,t) \|_{\mathbb{H}} = \left( |z|^4 + t^2 \right)^{1/4}$. The group $\mathbb{H}$ is a metric space when equipped with the left invariant metric $d_{\mathbb{H}}$, called the Korányi metric, defined by
\begin{align} \notag d_{\mathbb{H}}((z,t), (\zeta,\tau) ) &= \left\| (\zeta,\tau)^{-1} \ast (z,t) \right\|_{\mathbb{H}}  \\
\label{secondcomp} &= \left( |z-\zeta|^4 + |t-\tau-2z \wedge \zeta|^2 \right)^{1/4}; \end{align}
see \cite{cygan} for a proof of the triangle inequality. On any compact set, this metric is bi-Lipschitz equivalent to the usual Carnot-Carathéodory metric on $\mathbb{H}$ (see \cite{balogh}).

 For a given metric space, Hausdorff dimension is defined through the underlying distance, which for the Heisenberg group will always be the Korányi metric. Hausdorff dimension is invariant under a bi-Lipschitz change of the metric, so the main results given here will hold for the Carnot-Carathéodory metric too. Under either of these metrics, $\mathbb{H}$ has Hausdorff dimension 4.

For each $\theta \in [0,\pi)$, let \[   V_{\theta} = \{ \lambda e^{i \theta} \in \mathbb{C}: \lambda \in \mathbb{R} \},  \quad V_{\theta}^{\perp} = \{ \lambda ie^{i \theta} \in \mathbb{C}: \lambda \in \mathbb{R} \}. \]
Define the horizontal subgroups $\mathbb{V}_{\theta} \subseteq \mathbb{H}$ by
\[ \mathbb{V}_{\theta} = \{ (\lambda e^{i \theta},0) \in \mathbb{C} \times \mathbb{R}: \lambda \in \mathbb{R} \}, \]
and define the vertical subgroups $\mathbb{V}_{\theta}^{\perp} \subseteq \mathbb{H}$ by
\[ \mathbb{V}^{\perp}_{\theta} = \{(\lambda_1 i e^{i \theta}, \lambda_2)  \in \mathbb{C} \times \mathbb{R}: \lambda_1, \lambda_2 \in \mathbb{R} \}, \]
so that $\mathbb{V}_{\theta}^{\perp}$ is the Euclidean orthogonal complement of $\mathbb{V}_{\theta}$ in $\mathbb{R}^3$. Let $\pi_{V_{\theta}}: \mathbb{C} \to \mathbb{C}$ be the orthogonal projection onto $V_{\theta}$, and similarly let $\pi_{V_{\theta}^{\perp}}$ be projection onto $V_{\theta}^{\perp}$. The horizontal and vertical projections $P_{\mathbb{V}_{\theta}} : \mathbb{H} \to \mathbb{V}_{\theta}$ and  $P_{\mathbb{V}^{\perp}_{\theta}}: \mathbb{H} \to \mathbb{V}^{\perp}_{\theta}$ are defined for each $\theta \in [0,\pi)$ by  
\[P_{\mathbb{V}_{\theta}}(z,t) = \left(\pi_{V_{\theta}}(z) ,0 \right), \quad  P_{\mathbb{V}^{\perp}_{\theta}}(z,t) = \left(\pi_{V_{\theta}^{\perp}}(z), t - 2\pi_{V_{\theta}}(z) \wedge \pi_{V_{\theta}^{\perp}}(z) \right). \]
The term ``projection'' and the formulas for $P_{\mathbb{V}_{\theta}}$, $P_{\mathbb{V}^{\perp}_{\theta}}$ come from the unique way of writing an element \[ (z,t) = P_{\mathbb{V}_{\theta}^{\perp}}(z,t) \ast  P_{\mathbb{V}_{\theta}}(z,t), \]
 as a product of an element of $\mathbb{V}^{\perp}_{\theta}$ on the left, with an element of $\mathbb{V}_{\theta}$ on the right.  

In \cite[Theorem 1.4]{balogh} it was shown that for any Borel (or analytic) set $A \subseteq \mathbb{H}$,
\begin{equation} \label{baloghb} \dim P_{\mathbb{V}^{\perp}_{\theta}} A \geq \begin{cases} \dim A &\text{ if } 0 \leq \dim A < 1 \\
1 &\text{ if } 1 \leq \dim A < 3 \\
2  \dim A -5 &\text{ if } 3 \leq \dim A \leq 4, \end{cases} \end{equation}
for a.e.~$\theta \in [0,\pi)$, and it was conjectured that the lower bound $\dim P_{\mathbb{V}^{\perp}_{\theta}} A \geq \dim A$ actually holds in the larger range $0 \leq \dim A \leq 3$. The upper limit of 3 is necessary since the vertical subgroups $\mathbb{V}_{\theta}^{\perp}$ have Hausdorff dimension 3. In \cite{fassler}, Fässler and Hovila proved
\begin{equation} \label{fasslerb} \dim P_{\mathbb{V}^{\perp}_{\theta}} A \geq 1 + \frac{(\dim A-1)(\dim A-2)}{32(\dim A)^2}, \quad \text{for a.e.~$\theta \in [0,\pi)$,} \quad \dim A >2, \end{equation}
which improved \eqref{baloghb} in the range $2 < \dim A < 3.00348$ (approximately). The main result of this work is the following lower bound.
\begin{theorem} \label{maintheorem} If $A \subseteq \mathbb{H}$ is an analytic set with $\dim A >1$, then 
\[ \dim P_{\mathbb{V}^{\perp}_{\theta}} A  \geq    \begin{cases} \frac{\dim A}{2} &\text{ if } \dim A \in \left(2, \frac{5}{2} \right] \\
\frac{ \dim A(\dim A + 2)}{4\dim A -1}  &\text{ if } \dim A \in \left( \frac{5}{2} , 4 \right],  \end{cases}    \]
for a.e.~$\theta \in [0,\pi)$.
\end{theorem}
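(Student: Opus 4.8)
The plan is to use the potential-theoretic (energy) method, supplemented for large dimensions by the dyadic covering refinement provided by Proposition \ref{covering}. Fix $s<\dim A$ and, by Frostman's lemma, a Radon probability measure $\mu$ supported on a compact subset of $A$ with finite $s$-energy $I_s(\mu)=\int\int d_{\mathbb{H}}(p,q)^{-s}\,d\mu(p)\,d\mu(q)<\infty$. For each $\theta$ let $\mu_\theta=(P_{\mathbb{V}^{\perp}_{\theta}})_*\mu$ be the pushforward to $\mathbb{V}^{\perp}_{\theta}$. To conclude $\dim P_{\mathbb{V}^{\perp}_{\theta}}A\ge\beta$ for a.e.\ $\theta$, it suffices to prove $\int_0^\pi I_\beta(\mu_\theta)\,d\theta<\infty$, since then $I_\beta(\mu_\theta)<\infty$ for a.e.\ $\theta$. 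By Fubini this reduces to a single-pair kernel bound of the form $\int_0^\pi \Delta_\theta(p,q)^{-\beta}\,d\theta\lesssim d_{\mathbb{H}}(p,q)^{-s}$, where $\Delta_\theta(p,q):=d_{\mathbb{H}}(P_{\mathbb{V}^{\perp}_{\theta}}p,P_{\mathbb{V}^{\perp}_{\theta}}q)$. (The range $\dim A\in(1,2]$ is already contained in \eqref{baloghb}, so the content is the case $\dim A>2$.)

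First I would make the projected distance explicit. Writing $p=(z,t)$, $q=(\zeta,\tau)$ and $T=t-\tau-2\,z\wedge\zeta$ for the $\theta$-independent vertical part of $d_{\mathbb{H}}(p,q)$, a direct computation using that the wedge of two parallel vectors vanishes gives
\[ \Delta_\theta(p,q)^4=\langle z-\zeta,\,ie^{i\theta}\rangle^4+\big(T-2\langle z-\zeta,\,ie^{i\theta}\rangle\,\langle z+\zeta,\,e^{i\theta}\rangle\big)^2, \]
to be compared with $d_{\mathbb{H}}(p,q)^4=|z-\zeta|^4+T^2$. The horizontal factor $\langle z-\zeta,ie^{i\theta}\rangle$ vanishes to fourth order as $\theta$ approaches the single critical angle where $e^{i\theta}\parallel(z-\zeta)$, while the bracketed vertical factor is an affine function of $\langle z-\zeta,ie^{i\theta}\rangle$ and vanishes at one or two further critical angles.

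The main obstacle is the kernel estimate. The integral $\int_0^\pi\Delta_\theta^{-\beta}\,d\theta$ concentrates near these critical angles, and its size is governed by the interaction between the quartic vanishing of the horizontal term and the (generically transverse, but sometimes tangential) vanishing of the vertical term; the worst configurations are those in which the vertical factor vanishes precisely where the parametrization $\theta\mapsto\langle z-\zeta,ie^{i\theta}\rangle$ is itself critical, so that the Jacobian of the angle change of variables degenerates simultaneously. I would run a careful case analysis according to the relative sizes of $|z-\zeta|$ and $|T|^{1/2}$ (i.e.\ whether $d_{\mathbb{H}}(p,q)$ is horizontally or vertically dominated) and the location of the vertical critical angles. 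I expect this direct continuous estimate, after optimizing the admissible exponents, to yield the first-regime bound $\beta=\dim A/2$, valid for $\dim A\in(2,\tfrac{5}{2}]$.

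To do better for $\dim A>\tfrac{5}{2}$ I would pass to the discretized formulation and invoke Proposition \ref{covering}. Cover the support of $\mu$ by $\sim\delta^{-s}$ Korányi balls of radius $\delta$; the relevant input then becomes the transversality estimate controlling $\big|\{\theta\in[0,\pi):\Delta_\theta(p,q)\le\delta\}\big|$, which measures how often two $\delta$-separated balls collide under projection. The point of the covering argument is that this single-scale, second-moment information can be fed into Proposition \ref{covering} in place of the lossier pointwise kernel bound, thereby sharpening the admissible exponent to $\beta=\frac{\dim A(\dim A+2)}{4\dim A-1}$. The two estimates agree at $\dim A=\tfrac{5}{2}$, both giving $\tfrac{5}{4}$, which fixes the threshold between the regimes; letting $s\uparrow\dim A$ in each case and combining the a.e.\ statements then completes the proof.
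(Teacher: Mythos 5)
Your setup is fine as far as it goes: the Frostman measure exists, and your formula
\[
\Delta_\theta(p,q)^4=\langle z-\zeta,\,ie^{i\theta}\rangle^4+\bigl(T-2\langle z-\zeta,\,ie^{i\theta}\rangle\,\langle z+\zeta,\,e^{i\theta}\rangle\bigr)^2,\qquad T=t-\tau-2\,z\wedge\zeta,
\]
for the projected distance is correct. The fatal problem is the single-pair kernel estimate you build everything on: the inequality $\int_0^\pi\Delta_\theta(p,q)^{-\beta}\,d\theta\lesssim d_{\mathbb{H}}(p,q)^{-s}$ is \emph{false} for every $\beta\geq 1$, and no case analysis on the relative sizes of $|z-\zeta|$ and $|T|^{1/2}$ can rescue a pointwise-false inequality. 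Take any pair with $T=0$ and $|z|=|\zeta|$ (so $z+\zeta\perp z-\zeta$, and $d_{\mathbb{H}}(p,q)=|z-\zeta|>0$). At the angle $\theta_1$ where $e^{i\theta_1}\parallel z-\zeta$, both $\langle z-\zeta,ie^{i\theta}\rangle$ and $\langle z+\zeta,e^{i\theta}\rangle$ vanish, each to first order; hence the vertical term vanishes \emph{quadratically} in $\theta-\theta_1$ and one gets $\Delta_\theta\approx|\theta-\theta_1|\max\bigl\{|z-\zeta|,(|z-\zeta||z+\zeta|)^{1/2}\bigr\}$, i.e.\ linear vanishing of $\Delta_\theta$. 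Then $\int_0^\pi\Delta_\theta^{-\beta}\,d\theta=\infty$ for $\beta\geq1$. These degenerate pairs are precisely why the energy method, already pushed to its limit in \cite{balogh,balogh2}, produces the plateau at $1$ in \eqref{baloghb} for $1\leq\dim A<3$; since your first-regime target $\beta=\dim A/2$ exceeds $1$ throughout $(2,\tfrac52]$, that half of your argument cannot work as stated. (Your identification of the worst configurations --- vertical zeros aligned with critical points of $\theta\mapsto\langle z-\zeta,ie^{i\theta}\rangle$ --- misses the true enemy, which is the geometric condition $T=0$, $|z|=|\zeta|$.)

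Getting past $\beta=1$ requires showing that a fixed Frostman measure cannot charge too many pairs in near-degenerate position, which is intrinsically a multi-point argument, and nothing of that kind appears in your proposal. This is the actual content of the paper: Lemma \ref{compromise} considers quadruples $(v,v_1,v_2,v_3)$ with each $v_j$ close to $v$ in projection at angles taken from three separated blocks of directions; non-collinearity of $\zeta_1,\zeta_2,\zeta_3$ makes the map $G(\zeta,\tau)=(\tau-\tau_j-2\zeta\wedge\zeta_j)_{j=1,2,3}$ quantitatively invertible (the substitute for Marstrand's three circles lemma), pinning $v$ in a small Euclidean ball; Proposition \ref{covering} enters only at that point, to convert this Euclidean ball into few Korányi balls. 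Comparing upper and lower bounds for the $\nu^4$-measure of such quadruples --- with the transversality Lemma \ref{fasslerlemma} controlling overlaps --- yields the decay hypothesis of the abstract Lemma \ref{abstract}, which then gives the a.e.\ conclusion, and \emph{both} regimes of the theorem come out of this single mechanism (the two cases merely record which term of a maximum dominates). Your second regime instead asks Proposition \ref{covering} to absorb ``second-moment information'', but that proposition is a bare covering statement with no mechanism for exploiting incidence data, so the exponent $\frac{\dim A(\dim A+2)}{4\dim A-1}$ cannot be extracted from it alone. In short, the key idea --- the three-point incidence/non-collinearity argument --- is missing, and without it neither regime is reachable.
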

This improves \eqref{baloghb} and \eqref{fasslerb} in the range $2  < \dim A < \frac{12+\sqrt{109}}{7}$ (the upper bound is roughly 3.2). The proof employs some of the techniques used by Orponen and Venieri in \cite{venieri} for restricted families of projections in $\mathbb{R}^3$; the main difficulty in adapting this to the Heisenberg setting lies in finding a substitute for Marstrand's ``Three Circles Lemma'' (see \cite[Lemma 3.2]{wolff}). The key observation is that if two points $(z,t), (\zeta,\tau) \in \mathbb{H}$ have their vertical projections close to each other for some angle $\theta$, then the second component $|t-\tau-2z \wedge \zeta|$ of the Korányi distance between them is small, and the latter function is independent of the angle $\theta$. Furthermore, if $(\zeta_i,\tau_i) \in \mathbb{H}$ are three points such that the $\zeta_i$'s are not collinear, then there is at most one point $(z,t) \in \mathbb{H}$ such that $(t-\tau_i-2z \wedge \zeta_i)$ vanishes for all $i$'s simultaneously (this follows from a calculation involving linear algebra). A quantitative version of this constraint is used in the proof of Lemma \ref{compromise}.

\subsection{Notation and preliminaries}
Given two measurable spaces $X$ and $Y$, a measure $\nu$ on $X$ and a measurable function $f: X \to Y$, the pushforward $f_{\#}\nu$ of $\nu$ under $f$ is a measure on $Y$, defined by $f_{\#}\nu(E) = \nu(f^{-1}(E))$ for each measurable set $E \subseteq Y$.

For a real number $t$, let $\lceil t \rceil$ denote the least integer greater than or equal to $t$.

The relation $x \lesssim y$ will mean that $x \leq Cy$ for some constant $C>0$, where $C$ will sometimes depend on objects that are fixed throughout a given proof. 

 Let $|x|$ denote the Euclidean norm of an element $x \in \mathbb{R}^n$. The Euclidean distance $|x-y|$ between $x$ and $y$ may also be denoted by $d_E(x,y)$. For $x \in \mathbb{R}^3$ and $r>0$, let $B_E(x,r)$ and $B_{\mathbb{H}}(x,r)$ be the Euclidean and Korányi balls around $x$ of radius $r$. The following local Hölder condition from \cite{balogh3} shows that $\left(\mathbb{R}^3, d_E\right)$ and $\left( \mathbb{H}, d_{\mathbb{H}} \right)$ are homeomorphic; this fact is used implicitly throughout.
\begin{lemma}[{\cite[Lemma 2.1]{balogh3}}] \label{holder} For any $R>0$, there exists a positive constant $c=c(R)>0$ such that
\[ c^{-1}|v-w|  \leq d_{\mathbb{H}}(v,w) \leq c |v-w|^{1/2}, \]
for all $v,w \in \mathbb{R}^3$ with $|v|,|w| \leq R$.  
\end{lemma}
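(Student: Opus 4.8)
The plan is to compare the two metrics directly from their coordinate formulas, exploiting the fact that the only discrepancy between them comes from the ``twisting'' term $2 z \wedge \zeta$ in the second component of $d_{\mathbb{H}}$. Write $v = (z,t)$ and $w = (\zeta,\tau)$, and abbreviate $a = |z - \zeta|$, $b = |t - \tau|$, and $s = t - \tau - 2\, z \wedge \zeta$, so that $|v - w| = (a^2 + b^2)^{1/2}$ and, by \eqref{secondcomp}, $d_{\mathbb{H}}(v,w) = (a^4 + s^2)^{1/4}$. The single algebraic observation driving everything is that, by antisymmetry of the wedge product, $z \wedge \zeta = -\, z \wedge (z - \zeta)$, whence
\[ |s - (t - \tau)| = 2\,|z \wedge \zeta| = 2\,|z \wedge (z - \zeta)| \leq 2\,|z|\,|z - \zeta| \leq 2R a. \]
Rewriting the twist so that one factor is the horizontal displacement $z - \zeta$ is what makes the correction term proportional to $a$, and hence small when the points are close; this is the crux. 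Combined with $|z|,|\zeta| \leq R$ (so that $a \leq 2R$), it gives $b - 2Ra \leq |s| \leq b + 2Ra$.

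For the upper bound I would estimate
\[ d_{\mathbb{H}}(v,w)^4 = a^4 + s^2 \leq a^4 + (b + 2Ra)^2 \leq 4R^2 a^2 + 2b^2 + 8R^2 a^2, \]
using $a^4 \leq 4R^2 a^2$ and $(b + 2Ra)^2 \leq 2b^2 + 8R^2 a^2$. The right-hand side is at most $\max\{12R^2, 2\}\,(a^2 + b^2) = \max\{12R^2,2\}\,|v - w|^2$, and taking fourth roots yields $d_{\mathbb{H}}(v,w) \leq c\,|v - w|^{1/2}$ with $c = c(R)$.

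For the lower bound I would read off from $d_{\mathbb{H}}(v,w)^4 = a^4 + s^2$ the two consequences $a \leq d_{\mathbb{H}}(v,w)$ and $|s| \leq d_{\mathbb{H}}(v,w)^2$. Then $b \leq |s| + 2Ra \leq d_{\mathbb{H}}(v,w)^2 + 2R\,d_{\mathbb{H}}(v,w)$, so
\[ |v - w| \leq a + b \leq (1 + 2R)\,d_{\mathbb{H}}(v,w) + d_{\mathbb{H}}(v,w)^2. \]
The one place where locality is genuinely needed---and the main obstacle to a global statement---is the passage from $d_{\mathbb{H}}(v,w)^2$ to $d_{\mathbb{H}}(v,w)$: since $a \leq 2R$ and $|s| \leq b + 2Ra \leq 2R + 4R^2$, the distance is bounded, $d_{\mathbb{H}}(v,w) \leq M(R)$, so that $d_{\mathbb{H}}(v,w)^2 \leq M(R)\,d_{\mathbb{H}}(v,w)$. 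This absorbs the quadratic term and gives $|v - w| \leq (1 + 2R + M(R))\,d_{\mathbb{H}}(v,w)$. Taking $c = c(R)$ to be the larger of the two constants produced above completes the proof; the $R$-dependence is unavoidable precisely because of this last step.
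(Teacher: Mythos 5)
Your proof is correct, and every step checks out: the identity $z \wedge \zeta = -\,z \wedge (z-\zeta)$ does give $|s-(t-\tau)| \leq 2Ra$, the upper bound follows from $a \leq 2R$ and $(b+2Ra)^2 \leq 2b^2 + 8R^2a^2$, and the lower bound correctly absorbs the quadratic term $d_{\mathbb{H}}(v,w)^2$ using the boundedness of $d_{\mathbb{H}}$ on the region $|v|,|w| \leq R$, which is indeed where the $R$-dependence enters. Note, however, that the paper does not prove this lemma at all: it is quoted verbatim from \cite[Lemma 2.1]{balogh3} (Balogh--Rickly--Serra Cassano), so there is no internal proof to compare against. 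Your argument is a clean, self-contained substitute for that citation --- it isolates the twisting term $2z\wedge\zeta$ as the sole source of discrepancy between the two metrics and controls it by the horizontal displacement, which is essentially the same mechanism underlying the cited result, but your write-up has the advantage of making the origin of both the exponent $1/2$ and the locality requirement completely explicit.
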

In some situations, the following proposition gives a covering of a Euclidean ball by Korányi balls which is more efficient than the single ball covering implied by the previous lemma.
\begin{proposition}[{\cite[Proposition 3.4]{balogh3}}] \label{covering} For any $R>0$, there exists a positive integer $N=N(R)>0$ such that any Euclidean ball $B_E(v,r)$ with $|v| \leq R$ and $r \in (0,1)$ can be covered by at most $\left\lceil N/r\right\rceil$ Korányi balls of radius $r$. 
\end{proposition}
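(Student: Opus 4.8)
The plan is to build the covering from a maximal Korányi-separated subset of the Euclidean ball, and to count its cardinality using the \emph{Lebesgue measure} of Korányi balls rather than the crude square-root comparison of Lemma \ref{holder}. Fix $R$ and set $R' = R+1$. Let $\{w_j\}_j \subseteq B_E(v,r)$ be a maximal subset with $d_{\mathbb{H}}(w_i, w_j) \geq r/2$ for $i \neq j$. By maximality, every $p \in B_E(v,r)$ satisfies $d_{\mathbb{H}}(p, w_j) < r/2 < r$ for some $j$, so $B_E(v,r) \subseteq \bigcup_j B_{\mathbb{H}}(w_j, r)$, and it remains only to bound the number of $w_j$ by $\lceil N/r\rceil$.

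First I would compute the Lebesgue measure of a Korányi ball. From $\|(z,t)\|_{\mathbb{H}} = (|z|^4 + t^2)^{1/4}$ one has $B_{\mathbb{H}}(0,\rho) = \{(z,t) : |z|^4 + t^2 < \rho^4\}$, whose measure equals $C\rho^4$ for an absolute constant $C$ (integrate out $t$ for each fixed $z$, then rescale $z = \rho u$). Since $d_{\mathbb{H}}$ is left invariant, $B_{\mathbb{H}}(w,\rho) = w \ast B_{\mathbb{H}}(0,\rho)$, and the left translation $L_w(z,t) = w \ast (z,t)$ is an affine shear of unit Jacobian determinant; hence every Korányi ball of radius $\rho$ has Lebesgue measure exactly $C\rho^4$.

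Next I would show that the disjoint balls $B_{\mathbb{H}}(w_j, r/4)$ (disjoint because their centres are $r/2$-separated) all lie in a single Euclidean ball of radius comparable to $r$. Writing $w_j = (z_j, t_j)$ with $|z_j| \leq R'$, any $(z,t)$ with $\|(z,t)\|_{\mathbb{H}} < r/4$ satisfies $|z| < r/4$ and $|t| < r^2/16$, so $L_{w_j}(z,t) = (z_j + z,\, t_j + t - 2 z_j \wedge z)$ differs from $w_j$ by at most $r/4$ in the horizontal coordinate and by $r^2/16 + 2|z_j|(r/4) \leq C' r$ in the vertical coordinate, using $r < 1$. Thus $B_{\mathbb{H}}(w_j, r/4) \subseteq B_E(w_j, C'' r) \subseteq B_E(v, (1+C'')r)$. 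Comparing measures gives $(\#\{w_j\}) \cdot C (r/4)^4 \leq \mathrm{vol}\, B_E(v, (1+C'')r) \leq C''' r^3$, hence $\#\{w_j\} \leq N/r$ for an $N = N(R)$ depending only on $R$. Since $\#\{w_j\}$ is an integer, this yields the stated bound $\lceil N/r \rceil$.

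The main obstacle is precisely this measure computation. The naive route of inscribing, via Lemma \ref{holder}, a Euclidean ball of radius $\sim r^2$ inside each radius-$r$ Korányi ball treats Korányi balls as isotropic and produces the far weaker count $\sim r^{-3}$. Recognising instead that a Korányi ball of radius $r$ has Lebesgue measure $\sim r^4$ — reflecting its anisotropic $r \times r \times r^2$ shape together with the measure-preserving property of left translations — is exactly what produces the correct exponent $r^{-1}$.
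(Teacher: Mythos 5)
Your proof is correct. Note first that there is no internal proof to compare against: the paper imports Proposition \ref{covering} verbatim from \cite[Proposition 3.4]{balogh3}, so your argument serves as a self-contained substitute, and it takes a genuinely different route from the constructive one in the cited source. There, the covering is exhibited explicitly: a Korányi ball of radius $r$ centred at $(z_0,t_0)$ contains a \emph{sheared} Euclidean box of dimensions roughly $r\times r\times r^2$, and since the shear $t\mapsto t-2z_0\wedge z$ is the same for every ball centred on a fixed vertical line, a stack of $\lesssim 1/r$ such balls covers an honest cylinder of height $\sim r$, hence the Euclidean ball. Your packing argument --- maximal $r/2$-separated family, disjointness of the $r/4$-balls via the triangle inequality for $d_{\mathbb{H}}$ (Cygan \cite{cygan}), the exact value $C\rho^4$ for the Lebesgue measure of every Korányi $\rho$-ball via unimodularity of the (unipotent affine) left translations, and containment of the disjoint balls in a single Euclidean ball of volume $\lesssim_R r^3$ --- is sound at every step. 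In particular you use the hypotheses $|v|\leq R$ and $r<1$ exactly where they must enter, namely in the bound $r^2/16+2|z_j|r/4\lesssim_R r$ on the vertical Euclidean extent of the small Korányi balls; this dependence is unavoidable, since without a bound on the horizontal coordinate of the centre the covering number genuinely grows with $|v|$ and no absolute $N$ exists. What your approach buys: it is short, it uses nothing about the geometry of Korányi balls beyond their volume, and it transfers verbatim to any homogeneous norm on any Carnot group, with $4$ replaced by the homogeneous dimension $Q$ and $3$ by the topological dimension $n$, the gap $Q-n$ producing the covering count $r^{n-Q}$. What the constructive proof buys: explicit centres, and a template that also yields the companion coverings (e.g.~of Korányi balls by Euclidean balls) established alongside this one in \cite{balogh3}. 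The only details worth adding in a final write-up are the existence of the maximal separated family (greedy selection or Zorn) and its finiteness, which follows from the very volume bound you prove; neither is a gap.
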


The following version of Frostman's Lemma provides a characterisation of Hausdorff dimension for {\it analytic} sets (see \cite{howroyd,mattila3} for a proof). To state it, a subset $A$ of a complete separable metric space $X$ is called analytic if $A$ is the continuous image of a Borel set $B \subseteq Y$, for some complete separable metric space $Y$. In particular, every Borel subset of $X$ is analytic. 
\begin{lemma} \label{frostman} Let $X$ be a complete separable metric space, let $A$ be an analytic subset of $X$ and let $s>0$. If there exists a nonzero finite Borel measure $\nu$ on $A$ and a constant $C$ such that 
\begin{equation} \label{frostmanc} \nu(B(x,r)) \leq Cr^s \quad \text{for all $r>0$ and $x \in A$,} \end{equation} then $\dim A \geq s$. Conversely, if $\dim A >s$ then there exists a compactly supported, nonzero, finite Borel measure $\nu$ on $A$ satisfying \eqref{frostmanc}.
\end{lemma}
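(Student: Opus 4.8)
The statement splits into two implications, which I would handle separately. The forward direction is the elementary mass distribution principle and uses nothing about analyticity. Suppose $\nu$ is a nonzero finite Borel measure on $A$ with $\nu(B(x,r)) \leq C r^s$ for all $x \in A$ and $r>0$. Given any countable cover $\{U_i\}$ of $A$ with $\diam U_i \leq \delta$, I would discard those $U_i$ disjoint from $A$ (which carry no $\nu$-mass), and for each remaining $U_i$ pick $x_i \in U_i \cap A$, so that $U_i \subseteq B(x_i, 2\diam U_i)$. The hypothesis gives $\nu(U_i) \leq C(2\diam U_i)^s$, and summing yields
\[ 0 < \nu(A) \leq \sum_i \nu(U_i) \leq 2^s C \sum_i (\diam U_i)^s. \]
Taking the infimum over all such covers and letting $\delta \to 0$ produces $\nu(A) \leq 2^s C\, \mathcal{H}^s(A)$, so $\mathcal{H}^s(A) > 0$ and hence $\dim A \geq s$.

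For the converse I would first reduce from the analytic set $A$ to a compact subset. Since $\dim A > s$ we have $\mathcal{H}^s(A) = \infty$, and because the Hausdorff content $\mathcal{H}^s_\infty$ vanishes exactly when $\mathcal{H}^s$ does, this gives $\mathcal{H}^s_\infty(A) > 0$. The plan is to invoke Choquet's capacitability theorem: one checks that $\mathcal{H}^s_\infty$ is a Choquet capacity on $X$ — it is monotone, continuous from below along increasing sequences of arbitrary sets, and continuous from above along decreasing sequences of compacts — so that every analytic set is capacitable, giving
\[ \mathcal{H}^s_\infty(A) = \sup\{ \mathcal{H}^s_\infty(K) : K \subseteq A \text{ compact} \}. \]
In particular there is a compact $K \subseteq A$ with $\mathcal{H}^s_\infty(K) > 0$, and it suffices to build the desired measure on $K$.

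On the compact set $K$ I would construct the measure by a dyadic-type approximation. Replacing the dyadic cubes of Euclidean space by a nested sequence of finite coverings of $K$ by balls of radii $2^{-n}$, I would assign mass at the finest scale and then pass upward through the scales, capping the mass of each ball of radius $2^{-k}$ at a fixed multiple of $(2^{-k})^s$ and redistributing any excess. This yields, for each $n$, a measure $\nu_n$ supported near $K$ with $\nu_n(B(x,r)) \lesssim r^s$ for all $r \geq 2^{-n}$ and with total mass bounded below by a fixed multiple of $\mathcal{H}^s_\infty(K) > 0$. Since $K$ is compact the masses $\nu_n(K)$ are uniformly bounded, so by weak-$\ast$ compactness a subsequence converges to a measure $\nu$; this $\nu$ is nonzero and supported on $K$, and the scale-by-scale Frostman bound survives the limit because weak-$\ast$ convergence gives $\nu(B(x,r)) \leq \liminf_n \nu_n(B(x,r))$ on open balls, whence $\nu(B(x,r)) \leq C r^s$ for every $r>0$.

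The main obstacle is the compact construction together with its limiting step: in a general complete separable metric space there is no canonical dyadic grid, so the redistribution argument must be carried out relative to an ad hoc hierarchy of ball coverings, and one must verify both the uniform lower bound on the total mass (this is where $\mathcal{H}^s_\infty(K)$ genuinely enters, and where a max-flow/min-cut duality is the cleanest route) and the persistence of the Frostman inequality through the weak-$\ast$ limit at every scale simultaneously. Verifying that $\mathcal{H}^s_\infty$ is genuinely a Choquet capacity — in particular the continuity from above on decreasing compacts — is the other technical point. Both are carried out in the references cited immediately after the statement, so in the paper itself this lemma is simply quoted.
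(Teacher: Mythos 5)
Your forward direction (the mass distribution principle) is correct, and you rightly note it uses nothing about analyticity. The converse, however, has a genuine gap at its pivotal step: the assertion that the Hausdorff content $\mathcal{H}^s_\infty$ is a Choquet capacity on a general complete separable metric space. Continuity from above along decreasing compacts is indeed fine (fatten any cover to open sets to get outer regularity, then use compactness), but continuity from below along arbitrary increasing sequences is not something one simply ``checks''. The content is a Method~I construction and is badly non-additive, so it has too few Carath\'eodory-measurable sets for the general theory of regular outer measures to apply; that theory gives continuity from below for the measure $\mathcal{H}^s$, but $\mathcal{H}^s$ in turn fails continuity from above on decreasing compacts (shrinking balls around a point), so neither set function is a capacity for free. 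In $\mathbb{R}^n$ this is repaired by passing to a dyadic content and exploiting the tree structure of dyadic cubes; in a general Polish space no such lattice exists. This is not a removable technicality: reducing an analytic set of positive $\mathcal{H}^s$-measure to a compact subset of positive content in this generality is essentially the Besicovitch--Davies subset problem, which was open until Howroyd's paper \cite{howroyd} --- one of the two references cited for the lemma --- and Howroyd's solution consists precisely in replacing $\mathcal{H}^s_\infty$ by a \emph{weighted} content (covers $\{U_i\}$ carrying weights $c_i>0$ with $\sum_i c_i \chi_{U_i} \geq \chi_A$, at cost $\sum_i c_i (\diam U_i)^s$), because the weighted version has the continuity and duality properties that the plain content is not known to have.

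The same issue undermines your compact-set construction. Cap-and-redistribute (equivalently, max-flow/min-cut) is a tree argument: it needs each point to lie in exactly one cell per generation, with children partitioning parents, so that capping mass at one scale does not destroy the bound at another. An ad hoc hierarchy of ball coverings of a general compact metric space has neither property: balls at a fixed scale can overlap with unbounded multiplicity (no Besicovitch or doubling property is available in the stated generality, and metric ``dyadic cube'' constructions require doubling), so neither the uniform bound $\nu_n(B(x,r)) \lesssim r^s$ for all $r \geq 2^{-n}$ nor the lower bound $\nu_n(K) \gtrsim \mathcal{H}^s_\infty(K)$ follows from your sketch. Howroyd replaces the tree duality by a Hahn--Banach/minimax duality between Frostman measures and weighted covers, which is the missing mechanism here; Mattila's treatment \cite{mattila3} is the Euclidean dyadic argument plus the subset theorem, which again does not transfer verbatim. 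Your weak-$\ast$ limiting step and the portmanteau inequality on open balls are fine, as is the reduction $\dim A > s \Rightarrow \mathcal{H}^s_\infty(A) > 0$; so the architecture (mass distribution principle, reduction to a compact set, construction on the compact set) matches the standard proofs, but the two steps you defer as ``technical points'' are exactly the mathematical core, and as proposed they would fail outside the Euclidean/doubling setting in which the lemma is not being stated.
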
 
Measures satisfying \eqref{frostmanc} are sometimes called fractal measures, or ``$s$-Frostman measures'', and \eqref{frostmanc} is sometimes referred to as a ``Frostman condition''.

\section{Proof of lemmas and the main theorem}
Most of this section is devoted to proving the lemmas from which Theorem~\ref{maintheorem} will follow. The first lemma of this section is an abstract version of Lemma 2.5 from \cite{venieri} (see also \cite[Theorem 7.2]{kaenmaki}); the proof is not too different from the Euclidean case, but is included for completeness. In the statement of the lemma, $(\theta,x) \mapsto \pi_{\theta}(x)$ is an arbitrary continuous function, but all statements following the proof of the lemma will specialise to the case where $\pi_{\theta} = P_{\mathbb{V}^{\perp}_{\theta}}$ is a vertical projection on $\mathbb{H}$. The lemma essentially says that: given a fractal measure on a set $A$, if there is a quantitative restriction on how often the pushforward measure under the projection fails an $s$-Frostman condition, then a.e.~the dimension of $\pi_{\theta}(A)$ is at least $s$ (where $s$ may be smaller than $\dim A$, but ideally as close to $\dim A$ as possible). 

\begin{lemma}\label{abstract} Let $X$, $Y$ be metric spaces, with $X$ compact and $Y$ separable. Suppose that $\mu$ is a finite Borel measure on $X$, $\nu$ is a nonzero, finite, compactly supported Borel measure on $Y$, and $(\theta,y) \mapsto \pi_{\theta}(y)$ is a continuous function from $X \times Y$ into $Y$. Given $s>0$, if there exist $\eta, \delta_0 >0$ such that
\begin{equation} \label{assumption} \nu\left\{ y \in Y: \mu\left\{ \theta \in X :  \pi_{\theta \#}\nu (B(\pi_{\theta}(y), \delta) ) \geq \delta^s \right\} \geq \delta^{\eta} \right\} \leq \delta^{\eta}, \end{equation}
for all $\delta \in (0, \delta_0)$, then 
\[ \dim \pi_{\theta}( \supp \nu) \geq s \quad \text{ for $\mu$-a.e.~$\theta \in X$.} \]
\end{lemma}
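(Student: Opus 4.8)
The plan is to reduce the statement to a single application of the forward direction of Frostman's Lemma (Lemma \ref{frostman}): I will show that for $\mu$-a.e.~$\theta$ there is a nonzero restriction of $\nu$ whose pushforward under $\pi_\theta$ satisfies an $s$-Frostman condition on its support, and that this support sits inside $\pi_\theta(\supp \nu)$. The containment is clean: since $\nu$ is compactly supported and $\pi_\theta$ is continuous, $\pi_\theta(\supp \nu)$ is compact, hence closed, so the support of any pushforward of a restriction of $\nu$ lies in $\pi_\theta(\supp \nu)$. Thus a Frostman bound on such a pushforward will force $\dim \pi_\theta(\supp \nu) \geq s$ for the relevant $\theta$.

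To locate the good angles, first I would restate the hypothesis in terms of the sets $E_\delta(\theta) = \{ y \in Y : \pi_{\theta\#}\nu(B(\pi_\theta(y),\delta)) \geq \delta^s \}$ and the function $g_\delta(y) = \mu\{\theta : y \in E_\delta(\theta)\}$, so that \eqref{assumption} reads $\nu\{ g_\delta \geq \delta^\eta \} \leq \delta^\eta$. Splitting the integral of $g_\delta$ over $\{g_\delta < \delta^\eta\}$ and $\{g_\delta \geq \delta^\eta\}$ and applying Tonelli gives
\[ \int_X \nu(E_\delta(\theta))\, d\mu(\theta) = \int_Y g_\delta\, d\nu \leq \nu(Y)\,\delta^\eta + \mu(X)\,\nu\{g_\delta \geq \delta^\eta\} \leq \bigl(\nu(Y)+\mu(X)\bigr)\delta^\eta . \]
Summing this over the dyadic scales $\delta_n = 2^{-n}$ with $\delta_n < \delta_0$ yields a finite total, so by monotone convergence $\sum_n \nu(E_{\delta_n}(\theta)) < \infty$ for $\mu$-a.e.~$\theta$. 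For each such $\theta$, the Borel–Cantelli Lemma gives $\nu\bigl(\limsup_n E_{\delta_n}(\theta)\bigr)=0$.

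Fixing a good $\theta$, I would then define $F_N(\theta) = \supp\nu \setminus \bigcup_{n \geq N} E_{\delta_n}(\theta)$; these sets increase in $N$ to a set of full $\nu$-measure, so I may pick $N$ with $\nu(F_N(\theta)) > 0$ and set $\rho_\theta = \pi_{\theta\#}(\nu|_{F_N(\theta)})$, a nonzero finite measure. For $y_0 \in F_N(\theta)$ and $n \geq N$ one has $y_0 \notin E_{\delta_n}(\theta)$, whence $\rho_\theta(B(\pi_\theta(y_0),\delta_n)) \leq \pi_{\theta\#}\nu(B(\pi_\theta(y_0),\delta_n)) < \delta_n^s$. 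A standard two-step upgrade then produces the full Frostman condition: for an arbitrary radius $r<\delta_N$ I choose the dyadic scale $\delta_{n+1} \leq r < \delta_n$ to get $\rho_\theta(B(\pi_\theta(y_0),r)) \leq 2^s r^s$, and for an arbitrary center $w \in \supp\rho_\theta$ I use that any ball $B(w,r)$ of positive $\rho_\theta$-mass meets $\pi_\theta(F_N(\theta))$, so it is contained in a ball $B(\pi_\theta(y_0),2r)$ to which the previous bound applies; large radii are absorbed into the constant using finiteness of $\nu$. This gives $\rho_\theta(B(w,r)) \lesssim_\theta r^s$ for all $w,r$, and Lemma \ref{frostman} finishes the argument.

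The one point requiring genuine care, rather than routine estimation, is the joint measurability of $(\theta,y) \mapsto \mathbf{1}_{E_\delta(\theta)}(y)$ needed to justify the Tonelli step; I would verify it by writing $\pi_{\theta\#}\nu(B(\pi_\theta(y),\delta)) = \nu\{ z : d(\pi_\theta(z),\pi_\theta(y)) < \delta\}$ and using continuity of $\pi$ together with the lower semicontinuity of $w \mapsto \nu(\{z : d(\pi_\theta(z),w)<\delta\})$, so that the relevant set is measurable in the product. The remaining delicate bookkeeping is ensuring the passage from the dyadic, projected-center estimate to all centers and radii loses only harmless constants, which the doubling argument above handles; everything else is a direct transcription of the Euclidean scheme.
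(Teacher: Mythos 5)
Your proposal is correct, but it takes a genuinely different route from the paper's. The paper never constructs a Frostman measure on the image set: it fixes a compact set $E$ of angles where $\dim \pi_{\theta}(\supp \nu) < s-\epsilon$, takes covers of $\pi_{\theta}(\supp\nu)$ with $\sum_i \delta_i(\theta)^s < \epsilon'$, splits the covering balls into light and heavy ones, and bounds $\mu(E) \lesssim \epsilon'$ by three terms controlled respectively by the hypothesis \eqref{assumption}, the cover-sum condition, and Fubini plus the definition of $Z_{\delta}$; the bulk of that proof is devoted to choosing the covers piecewise constant in $\theta$ (via compactness of $E$, $X$, $Y$ and uniform continuity) so that the cover-dependent quantities are $\mu$-measurable. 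You instead convert \eqref{assumption} by Chebyshev and Tonelli into $\int_X \nu(E_{\delta}(\theta))\,d\mu(\theta) \lesssim \delta^{\eta}$, sum over dyadic scales, apply Borel--Cantelli to extract, for $\mu$-a.e.\ $\theta$, a set $F_N(\theta)$ of positive $\nu$-measure avoiding all bad scales below $2^{-N}$, and push $\nu|_{F_N(\theta)}$ forward to obtain an $s$-Frostman measure carried by $\pi_{\theta}(\supp\nu)$, so the mass distribution principle finishes. What your route buys: the only measurability input is joint measurability of $(\theta,y) \mapsto \pi_{\theta\#}\nu(B(\pi_{\theta}(y),\delta))$, which the paper also needs and proves exactly as you sketch (openness of $\left\{(\theta,y) : d(\pi_{\theta}(z),\pi_{\theta}(y)) < \delta\right\}$ for fixed $z$, then Fubini), so all the piecewise-constant-cover bookkeeping disappears; compactness of $X$ and $Y$ is never used, only $\mu(X)<\infty$; and you in fact get the slightly stronger conclusion $\mathcal{H}^s(\pi_{\theta}(\supp\nu)) > 0$ for a.e.\ $\theta$. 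What the paper's scheme buys is that it stays entirely at the level of covers and the definition of Hausdorff dimension, in the Orponen--Venieri template, with no restriction or Borel--Cantelli step. Two details to make explicit in a full write-up: run the doubling trick once more so that the Frostman bound holds at every center of $\pi_{\theta}(\supp\nu)$, not only at centers in $\supp\rho_{\theta}$, since Lemma \ref{frostman} requires the bound at all $x \in A$; and since $Y$ is not assumed complete, either apply Lemma \ref{frostman} inside the compact metric space $\pi_{\theta}(\supp\nu)$ itself (Hausdorff dimension being intrinsic) or invoke the mass distribution principle directly. Both fixes are immediate.
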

\begin{remark}
The proof of the lemma necessarily has a few measure-theoretic technicalities; the core part of the proof is the calculation following \eqref{start}. 
\end{remark}
\begin{proof}[Proof of Lemma~\ref{abstract}] Let $\mu$, $\nu$, $\eta$, $\delta_0$, $s$ be given. It is first shown that the sets occurring in \eqref{assumption} are measurable. For fixed $x \in Y$, and any constant $c>0$, the set
\[ S := \left\{ (\theta,y) \in X \times Y : d( \pi_{\theta}(x),\pi_{\theta}(y) ) <c \right\}, \]
is open in $X \times Y$ by continuity. Since $Y$ is separable, the Borel sigma algebra on $X \times Y$ is equal to the one generated by the products of Borel sets \cite[Lemma 6.4.2]{bogachev}, and is therefore contained in the class of $(\mu \times \nu)$-measurable sets, since $\mu$ and $\nu$ are Borel by assumption. Hence $S$ is $(\mu \times \nu)$-measurable. Therefore the function 
\[ f(\theta,y) = \chi_{\pi_{\theta}^{-1} ( B( \pi_{\theta}(x), c ) )}(y), \]
is $(\mu \times \nu)$-measurable, and so the function 
\begin{equation} \label{measurable} \theta \mapsto \int f(\theta,y) \, d\nu(y) = \pi_{\theta \#} \nu(B(\pi_{\theta}(x), c )) \end{equation}
 is $\mu$-measurable in $\theta$ by part (iv) of Fubini's Theorem from \cite{evans}. This proves $\mu$-measurability of the inner part of \eqref{assumption}.

For the outer part of \eqref{assumption}, denoted by
\begin{equation} \label{Zdelta} Z_{\delta} := \left\{ y \in Y: \mu\left\{ \theta \in X :  \pi_{\theta \#} \nu (B (\pi_{\theta}(y), \delta ) ) \geq \delta^s \right\} \geq \delta^{\eta} \right\}, \end{equation}
a similar argument to that for \eqref{measurable} shows that for any $\delta>0$ the function $(\theta,y) \mapsto \pi_{\theta \#} \nu(B(\pi_{\theta}(y), \delta ))$ is $(\mu \times \nu)$-measurable, and hence the function 
\[ y \mapsto \mu\left\{ \theta \in X :  \pi_{\theta \#} \nu (B(\pi_{\theta}(y), \delta) ) \geq \delta^s \right\} \]
is a $\nu$-measurable function of $y$, by part (iii) of Fubini's Theorem from \cite{evans}. This shows that $Z_{\delta}$ is $\nu$-measurable.  

 Since $\nu$ is compactly supported and $Y$ is separable, and since $(\theta,y) \mapsto \pi_{\theta}(y)$ is continuous, to prove the lemma it may be assumed that $Y$ is compact. Let $\epsilon > 0$ and let $E \subseteq X$ be a compact set with
\begin{equation} \label{dimdefn} \dim \pi_{\theta}( \supp \nu) < s-\epsilon \quad \text{ for every $\theta \in E$.} \end{equation}
Any finite Borel measure on a compact metric space is inner regular, so to establish the lemma it suffices to show that $\mu(E) = 0$. Let $\epsilon' >0$, and choose a positive $\delta_1 < \delta_0/4$ small enough to ensure $\delta_1^{\eta} \leq \epsilon'$. For each $\theta \in E$, using \eqref{dimdefn} let $\left\{B(\pi_{\theta}(z_i(\theta)), \delta_i(\theta) ) \right\}_{i=1}^{\infty}$ be a cover of $\pi_{\theta}(\supp \nu)$ by balls in $Y$ of dyadic radii $\delta_i(\theta)< \delta_1$ such that $\sum_{i=1}^{\infty} \delta_i(\theta)^s < \epsilon'$. It may be assumed that each $z_i(\theta) \in \supp \nu$. 

\begin{sloppypar}For each integer $j$, let
\[ D_{\theta}^j := \bigcup_{ \delta_i(\theta) = 2^{-j} } B\left( \pi_{\theta}(z_i(\theta)), 2^{-j} \right), \] 
and let $D_{\theta,g}^j$ be the subset of $D_{\theta}^j$ defined as the union over those balls $B\left( \pi_{\theta}(z_i(\theta)), 2^{-j} \right)$ in $D_{\theta}^j$ with $\pi_{\theta \#}\nu \left( B\left( \pi_{\theta}(z_i(\theta)), 2^{-j} \right) \right) < 2^{-(j-1)s}$. Let $D_{\theta,b}^j$ be the union of the remaining balls in $D_{\theta}^j$, equivalently $D_{\theta,b}^j = D_{\theta}^j \setminus D_{\theta,g}^j$. As will be shown, it is possible to choose the covers of $\pi_{\theta}(\supp \nu)$ in such a way that the functions 
\begin{equation} \label{measurable2}  \pi_{\theta \#}\nu(B(\pi_{\theta}(z_i(\theta)), c )), \quad   \pi_{\theta \#}\nu\left( D_{\theta}^j \right), \quad  \nu\left(  \pi_{\theta}^{-1} \left(D_{\theta}^j\right)  \cap Z_{\delta}\right), \end{equation}
and 
\begin{equation} \label{measurable3}  \nu\left( \pi_{\theta}^{-1} \left(D_{\theta,g}^j\right)\setminus Z_{\delta}\right), \quad \nu\left( \pi_{\theta}^{-1} \left(D_{\theta,b}^j\right)\setminus Z_{\delta}\right), \end{equation}
 are $\mu$-measurable in $\theta$ on $E$, for any $c,\delta>0$, for every $i$ and for every integer $j$. 

To verify the $\mu$-measurability of \eqref{measurable2} and \eqref{measurable3}, the compactness of $\pi_{\theta}(\supp \nu)$ for each fixed $\theta \in E$ ensures that there is a finite subcollection (not relabelled) of balls $B(\pi_{\theta}(z_i(\theta)), \delta_i(\theta))$ which cover $\pi_{\theta}(\supp \nu)$. The union $U_{\theta}$ of these balls is an open set, and therefore contains an open $\delta'$-neighbourhood $\mathcal{N}_{\delta'}(\pi_{\theta}(\supp \nu))$ of $\pi_{\theta}(\supp \nu)$ for some $\delta' >0$. The compactness of $Y$ (assumed without loss of generality) ensures that the map $(\theta,y) \mapsto \pi_{\theta}(y)$ is uniformly continuous on $X \times Y$, which implies
\[ \pi_{\theta'} (\supp \nu) \subseteq \mathcal{N}_{\delta'}(\pi_{\theta}(\supp \nu)) \subseteq U_{\theta}, \]
for all $\theta'$ in a sufficiently small ball $B_{\theta}$ around $\theta$. Therefore the balls $B\left(\pi_{\theta}(z_i(\theta)\right), \delta_i(\theta))$ form a finite cover of $\pi_{\theta'}(\supp \nu)$ for $\theta' \in B_{\theta}$. The sets $B_{\theta}$ cover $E$ as $\theta$ ranges over $E$, so by compactness of $E$ there is a finite subcollection $\left\{B_{\theta_1}, \dotsc, B_{\theta_N}\right\}$ such that 
\[ E = B_{\theta_1} \cup B_{\theta_2} \setminus B_{\theta_1} \cup \dotsb \cup B_{\theta_N} \setminus \cup_{i=1}^{N-1} B_{\theta_i}. \]
The functions $z_i(\theta)$ and $\delta_i(\theta)$ may then be taken to be constant on each part of this Borel partition of $E$. By the piecewise constant property and the $\mu$-measurability of \eqref{measurable}, the function $\pi_{\theta \#}\nu(B(\pi_{\theta}(z_i(\theta)), c ))$ is $\mu$-measurable for every $i$ and any $c>0$. This proves the $\mu$-measurability of the first function in \eqref{measurable2}. Measurability of the other functions follows from a similar argument to the measurability of \eqref{measurable}, using the piecewise constant property of the $\delta_i(\theta)$'s and the $\nu$-measurability of $Z_{\delta}$. This shows that the covers $\left\{B(\pi_{\theta}(z_i(\theta)), \delta_i(\theta) ) \right\}_{i=1}^{\infty}$ may be chosen to make the functions in \eqref{measurable2} and \eqref{measurable3} $\mu$-measurable over $E$. \end{sloppypar}

For each $\theta \in E$, 
\begin{equation} \label{start} \nu(Y) \leq \sum_{j > \left|\log_2 \delta_1\right|} \pi_{\theta \#}\nu\left( D_{\theta}^j \right),  \end{equation}
by the definition of the cover and the sets $D_{\theta}^j$. Dividing both sides by $\nu(Y) \gtrsim 1$ and integrating over $E$ gives
\begin{align} \notag \mu(E) &\lesssim  \int_E \sum_{j > \left|\log_2 \delta_1\right| }  \pi_{\theta \#}\nu\left( D_{\theta}^j \right) \, d\mu(\theta) \\
\label{int1} &\leq \sum_{j > \left|\log_2 \delta_1\right|} \int_E \nu\left(  \pi_{\theta}^{-1} \left(D_{\theta}^j\right)  \cap Z_{2^{-(j-1)}}\right) \, d\mu(\theta) \\ 
\label{int2} &\quad +   \int_E \sum_{j > \left|\log_2 \delta_1\right|} \nu\left( \pi_{\theta}^{-1} \left(D_{\theta,g}^j\right)\setminus Z_{2^{-(j-1)}}\right) \, d\mu(\theta) \\
\label{int3} &\quad  + \sum_{j > \left|\log_2 \delta_1\right|} \int_{E} \nu\left( \pi_{\theta}^{-1} \left(D_{\theta,b}^j\right)\setminus Z_{2^{-(j-1)}}\right) \, d\mu(\theta). \end{align} 
It remains to bound the integrals in \eqref{int1}, \eqref{int2} and \eqref{int3}. Up to a constant the first sum, in \eqref{int1}, is bounded by $\delta_1^{\eta} \leq \epsilon'$ by the assumption \eqref{assumption} on each $Z_{\delta}$ in the statement of the lemma, and the choice of $\delta_1$. The integral in \eqref{int2} is $\lesssim \epsilon'$ by the condition $\sum_{i=1}^{\infty} \delta_i(\theta)^s \leq \epsilon'$ defining the cover and by the definition of $D_{\theta,g}^j$. It remains to bound \eqref{int3}. For each $j$ the set 
\[ \left\{ (\theta, y) \in E \times Y: \pi_{\theta}(y) \in D_{\theta,b}^j \right\} \]
is $(\mu \times \nu)$-measurable by the piecewise constant property of the defining cover. Applying Fubini's Theorem and then the definition of $D_{\theta,b}^j$ to each integral in \eqref{int3} results in
\begin{align*}  &\sum_{j > \left|\log_2 \delta_1\right|} \int_E \nu\left( \pi_{\theta}^{-1} \left(D_{\theta,b}^j\right)\setminus Z_{2^{-(j-1)}}\right) \, d\mu(\theta) \\
&\quad = \sum_{j > \left|\log_2 \delta_1\right|} \int_{Y \setminus Z_{2^{-(j-1)} } } \mu \Big\{ \theta \in E : \pi_{\theta}(y)  \in D_{\theta,b}^j \Big\} \, d\nu(y) \\
&\quad \leq  \sum_{j > \left|\log_2 \delta_1\right| } \int_{Y \setminus Z_{2^{-(j-1)} } } \mu\Big\{ \theta \in E : \pi_{\theta \#}\nu \left( B\left( \pi_{\theta}(y), 2^{-(j-1)} \right) \right) \geq 2^{-(j-1)s} \Big\} \, d\nu(y) \\
&\quad \lesssim  \sum_{j > \left|\log_2 \delta_1\right|} 2^{-j \eta} \qquad \text{by the definition of $Z_{2^{-(j-1)}}$ in \eqref{Zdelta},} \\
&\quad \lesssim \epsilon', \end{align*}
by the condition $\delta_1^{\eta} \leq \epsilon'$ imposed in the choice of $\delta_1$. Therefore $\mu(E) \lesssim \epsilon'$ with $\epsilon'$ arbitrary, and thus $\mu(E)=0$. This proves the lemma. \end{proof}

The following lemma is a slightly refined version of Lemma 3.5 from \cite{fassler}, see also \cite[Section 4]{balogh2}. The lemma is a kind of transversality condition, which says that in a quantitative sense the paths of two fixed, distinct points under the family of vertical projections pass each other transversally. The proof has only minor adjustments to those in \cite{balogh2,fassler} but is included for completeness. 

\begin{lemma} \label{fasslerlemma} There exists a positive constant $C$ such that for any $v,w \in \mathbb{H}$ and any $\delta>0$, the set
\[ \left\{ \theta \in [0,\pi) : d_{\mathbb{H}}\left( P_{\mathbb{V}^{\perp}_{\theta}}(v), P_{\mathbb{V}^{\perp}_{\theta}}(w) \right) \leq \delta \right\}, \]
is contained in a disjoint union of at most 40 intervals, each of length less than $\frac{C\delta}{d_{\mathbb{H}}(v,w) }$.
\end{lemma}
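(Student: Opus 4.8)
The plan is to reduce the condition $d_{\mathbb{H}}\bigl(P_{\mathbb{V}^{\perp}_{\theta}}(v),P_{\mathbb{V}^{\perp}_{\theta}}(w)\bigr)\le\delta$ to a pair of scalar conditions, one for each component of the Kor\'anyi distance, and then analyze the resulting sublevel sets as functions of $\theta$. Writing $v=(z,t)$, $w=(\zeta,\tau)$, $\xi=z-\zeta$, $\sigma=z+\zeta$, and $s=t-\tau-2z\wedge\zeta$, a direct computation, using that the first components $\pi_{V_{\theta}^{\perp}}(z)$ and $\pi_{V_{\theta}^{\perp}}(\zeta)$ lie in the same line $V_{\theta}^{\perp}$ and hence wedge to zero, gives
\[ d_{\mathbb{H}}\bigl(P_{\mathbb{V}^{\perp}_{\theta}}(v),P_{\mathbb{V}^{\perp}_{\theta}}(w)\bigr)^{4}=\bigl|\pi_{V_{\theta}^{\perp}}(\xi)\bigr|^{4}+G(\theta)^{2}, \]
where $\bigl|\pi_{V_{\theta}^{\perp}}(\xi)\bigr|=|\xi|\,|\sin(\phi-\theta)|$ with $\phi=\arg\xi$, and where the second component simplifies to $G(\theta)=(t-\tau)+\im\bigl[e^{-2i\theta}(\zeta^{2}-z^{2})\bigr]$. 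Thus, up to a fixed constant, the condition $d_{\mathbb{H}}(\cdots)\le\delta$ is equivalent to the two conditions $|\xi|\,|\sin(\phi-\theta)|\le\delta$ and $|G(\theta)|\le\delta^{2}$. Writing $R=d_{\mathbb{H}}(v,w)$, so that $R^{4}=|\xi|^{4}+s^{2}$, I may assume $\delta<R$ throughout, since otherwise the whole of $[0,\pi)$ has length at most $C\delta/R$ and serves as the single required interval.

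The two features of $G$ that drive the argument are that it is a pure sinusoid in $2\theta$ of amplitude $A=|z^{2}-\zeta^{2}|=|\xi|\,|\sigma|$, together with the two identities $G(\phi)=s$ and $|(t-\tau)-s|=|2z\wedge\zeta|=|\sigma\wedge\xi|\le A$, the latter following from $2z\wedge\zeta=-(z+\zeta)\wedge(z-\zeta)$. I would then split into regimes according to which term of $R^{4}$ dominates. If $|\xi|\ge R/2$, the first condition forces $|\sin(\phi-\theta)|\le 2\delta/R$, whose solution set in $[0,\pi)$ is at most two intervals, each of length $\lesssim\delta/R$; since the set in question is contained in this, this regime is finished.

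If instead $|\xi|<R/2$, then $|s|\gtrsim R^{2}$, the first condition carries no information, and everything must come from $\{|G|\le\delta^{2}\}$. Here I would use an amplitude dichotomy. If $A<R^{2}/100$, then $|t-\tau|\ge|s|-A>R^{2}/4$, whence $|G(\theta)|\ge|t-\tau|-A>\delta^{2}$ for every $\theta$ and the sublevel set is \emph{empty}. If $A\ge R^{2}/100$, then $\{|G|\le\delta^{2}\}$ is the $\theta$-preimage, under one full period of a sine, of an interval of length $2\delta^{2}/A$; this is a union of at most four intervals, and I would bound the length of each by splitting according to proximity to the extrema of $G$. Away from the extrema one has $|G'(\theta)|=2A|\cos(\cdots)|\gtrsim A\gtrsim R^{2}$, giving length $\lesssim\delta^{2}/R^{2}\le\delta/R$; near an extremum, where $|G''|=4A|\sin(\cdots)|\approx 4A$, the quadratic behaviour of $G$ gives length $\lesssim\delta/\sqrt{A}\lesssim\delta/R$. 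In every case the set is covered by a small number of intervals (at most six, hence comfortably below forty), each of length $\lesssim\delta/R$, which is the claim.

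The obstacle to anticipate is precisely this second regime: because $v$ and $w$ are arbitrary and not confined to a compact set, both the amplitude $A=|\xi|\,|\sigma|$ and the constant term $t-\tau$ can be arbitrarily large, so neither a naive derivative bound nor a compactness argument controls $\{|G|\le\delta^{2}\}$ on its own. The resolution is the dichotomy above: the inequality $|(t-\tau)-s|\le A$ shows that a small amplitude forces the sublevel set to be empty, so that whenever it is nonempty one automatically has $A\gtrsim R^{2}$, which is exactly the lower bound needed to make both the derivative estimate and the near-extremum convexity estimate produce intervals of length $\lesssim\delta/R$. Verifying the two displayed identities for $G$ and tracking constants so that the final interval count stays below forty are then the only routine points.
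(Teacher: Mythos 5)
Your proposal is correct in substance and shares the paper's overall skeleton: split according to whether the horizontal or the vertical component of the Kor\'anyi distance dominates, then control the $\theta$-sublevel set of the vertical component, which both you and the paper recognize as a constant plus a sinusoid in $2\theta$. The execution of the key case, however, is genuinely more elementary than the paper's. In the regime where the vertical part dominates, the paper first disposes of the degenerate case $z=\pm\zeta$, then normalizes the second component by $|z+\zeta|\,|z-\zeta|$ to get $F(\theta)=a+2p\wedge\pi_{V_\theta}(q)$, splits off the case $|a|\ge 4$, proves the oscillation identity $|F'/2|^{2}+|F''/4|^{2}=1$, counts components of the sublevel set via Rolle's theorem, and finally invokes a sublevel-set lemma of Christ on intervals where $|F'|>1/2$ or $|F''|>3$. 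Your explicit formula $G(\theta)=(t-\tau)+\im\bigl[e^{-2i\theta}(\zeta^{2}-z^{2})\bigr]$ is correct (indeed $2\pi_{V_\theta}(z)\wedge\pi_{V_\theta^{\perp}}(z)=\im\bigl(z^{2}e^{-2i\theta}\bigr)$, and the cross term vanishes because both projected first components lie on the line $V_\theta^{\perp}$), and it exposes the amplitude $A=|\xi|\,|\sigma|$ directly; your dichotomy $A\lessgtr R^{2}/100$ then subsumes both the paper's degenerate case $z=\pm\zeta$ (where $A=0$) and its case $|a|\ge 4$ (which is exactly $A\lesssim|s|\approx R^{2}$), and the by-hand first/second derivative analysis of an explicit sine replaces Rolle plus the citation to Christ. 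What your route buys is self-containedness and no normalization; what the paper's buys is an argument that never needs the closed trigonometric form and is closer to the abstract transversality framework used elsewhere in this literature.

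One point needs repair, though it is only constant bookkeeping: the opening reduction to $\delta<R$ is not strong enough for your second regime. In the subcase $A<R^{2}/100$ you conclude $|G(\theta)|\ge|t-\tau|-A>\delta^{2}$, but the lower bound you actually possess is a fixed multiple of $R^{2}$ strictly below $R^{2}$ (about $0.23R^{2}$ with your stated $|t-\tau|>R^{2}/4$), so emptiness can fail when $\delta$ is close to $R$. Similarly, in the subcase $A\ge R^{2}/100$ the threshold $\delta^{2}$ can be as large as $100A$ when $\delta$ is comparable to $R$, and then the components of $\{|G|\le\delta^{2}\}$ need not be short at all. Both issues disappear if you instead assume $\delta<R/100$ at the outset, which costs nothing: for $\delta\ge R/100$ the single interval $[0,\pi)$ has length $\pi\le 100\pi\delta/R$, so any $C>100\pi$ works. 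With that adjustment (and the same trivial-case convention the paper itself uses at the end of its proof), your interval counts and length bounds are all well within the required forty intervals of length $C\delta/d_{\mathbb{H}}(v,w)$.
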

\begin{proof} Fix $v,w \in \mathbb{H}$ and write $v=(z,t), w=(\zeta,\tau)$. If 
\begin{equation} \label{caseone} |z-\zeta| \geq |t-\tau-2z \wedge \zeta|^{1/2}, \end{equation}
then
\begin{multline} \label{trivialcase} \left\{ \theta \in [0,\pi) : d_{\mathbb{H}}\left( P_{\mathbb{V}^{\perp}_{\theta}}(v), P_{\mathbb{V}^{\perp}_{\theta}}(w) \right) \leq \delta \right\} \\
\subseteq \left\{ \theta \in [0,\pi) : |\pi_{V_{\theta}^{\perp}}(z-\zeta) | \leq \delta \right\}. \end{multline}
By writing $z-\zeta = |z-\zeta|e^{i \phi}$ and rotating so that $\phi = 0$, the right hand side of \eqref{trivialcase} is contained in two intervals of length $\lesssim \frac{\delta}{|z-\zeta|} \lesssim \frac{\delta}{d_{\mathbb{H}}(v,w)}$. This proves the lemma in the case that \eqref{caseone} holds.

If \eqref{caseone} fails, then
\begin{equation} \label{case2} |z-\zeta| < |t-\tau-2z \wedge \zeta|^{1/2}. \end{equation}

Suppose \eqref{case2} holds and $z = \pm \zeta$. Then 
\begin{align} \notag d_{\mathbb{H}}(v,w) &= \left( |z-\zeta|^4 + |t-\tau - 2 z \wedge \zeta |^2 \right)^{1/4} \\
\notag &= \left( |z-\zeta|^4 + |t-\tau |^2 \right)^{1/4} \\
\notag &\leq 2^{1/4} |t-\tau |^{1/2} \\
\notag &=  2^{1/4} \left|t-\tau - 2 \pi_{V_{\theta}}(z) \wedge \pi_{V_{\theta}^{\perp}}(z) + 2\pi_{V_{\theta}}(\zeta) \wedge \pi_{V_{\theta}^{\perp}}(\zeta)  \right|^{1/2} \\
\label{trivial} &\leq 2^{1/4}d_{\mathbb{H}}\left( P_{\mathbb{V}^{\perp}_{\theta}}(v), P_{\mathbb{V}^{\perp}_{\theta}}(w) \right), \end{align}
and so
\[ \left\{ \theta \in [0,\pi) : d_{\mathbb{H}}\left( P_{\mathbb{V}^{\perp}_{\theta}}(v), P_{\mathbb{V}^{\perp}_{\theta}}(w) \right) \leq \delta \right\} \subseteq \left\{ \theta \in [0,\pi) : d_{\mathbb{H}}\left( v, w\right) \leq 2^{1/4}\delta \right\}.\]
The right hand side is $[0,\pi)$ if $d_{\mathbb{H}}\left( v, w\right) \leq 2^{1/4}\delta$, which in this case is an interval of length $\pi \lesssim \frac{\delta}{d_{\mathbb{H}}\left( v, w\right)}$. Otherwise the right hand side is empty and there is nothing to show. This proves the lemma in the case of \eqref{case2} with $z = \pm \zeta$.

It remains to consider the case in which \eqref{case2} holds but $z \neq \pm \zeta$. Let
\[ a = \frac{ t-\tau - 2 z \wedge \zeta}{|z+\zeta| |z-\zeta|}, \quad p = \frac{z-\zeta}{|z-\zeta|}, \quad q = \frac{z+\zeta}{|z+\zeta|}. \]
The identities 
\[ z \wedge \zeta =  \pi_{V_{\theta}}(z) \wedge \pi_{V_{\theta}^{\perp}}(\zeta) -  \pi_{V_{\theta}}(\zeta) \wedge \pi_{V_{\theta}^{\perp}}(z), \]
and 
\[ \pi_{V_{\theta}^{\perp}} (x) \wedge \pi_{V_{\theta}}(y) = x \wedge \pi_{V_{\theta}}(y), \quad x,y \in \mathbb{R}^2, \]
yield
\begin{align} \notag d_{\mathbb{H}}\left( P_{\mathbb{V}^{\perp}_{\theta}}(v), P_{\mathbb{V}^{\perp}_{\theta}}(w) \right) &\geq \left|t-\tau- 2\pi_{V_{\theta}}(z) \wedge \pi_{V_{\theta}^{\perp}}(z)+ 2\pi_{V_{\theta}}(\zeta) \wedge \pi_{V_{\theta}^{\perp}}(\zeta)\right|^{1/2} \\
\notag &= \left|t-\tau- 2z \wedge \zeta +2 \pi_{V_{\theta}^{\perp}}(z-\zeta) \wedge \pi_{V_{\theta}}(z+\zeta) \right|^{1/2} \\
\notag &= \left|t-\tau- 2z \wedge \zeta +2(z-\zeta) \wedge \pi_{V_{\theta}}(z+\zeta) \right|^{1/2} \\ 
\label{lastexpr} &= |z+\zeta|^{1/2}|z-\zeta|^{1/2} \left|a + 2p \wedge \pi_{V_{\theta}}(q) \right|^{1/2}. \end{align}
If $|a| \geq 4$ then $\left|a + 2p \wedge \pi_{V_{\theta}}(q) \right| \geq |a|/2$, and so by \eqref{case2},
\[ \eqref{lastexpr}  \gtrsim |z+\zeta|^{1/2} |z-\zeta|^{1/2}|a|^{1/2}   = |t-\tau-2z\wedge \zeta|^{1/2} \gtrsim  d_{\mathbb{H}}(v,w). \]
The argument is then similar to the case of \eqref{trivial}. Hence it may be assumed that $|a| < 4$. With this assumption, \eqref{case2} gives
\[ d_{\mathbb{H}}(v,w) \lesssim |t-\tau - 2z \wedge \zeta|^{1/2} \lesssim |z+\zeta|^{1/2}|z-\zeta|^{1/2}, \]
and putting this into \eqref{lastexpr} yields
\[ d_{\mathbb{H}}\left( P_{\mathbb{V}^{\perp}_{\theta}}(v), P_{\mathbb{V}^{\perp}_{\theta}}(w) \right) \gtrsim d_{\mathbb{H}}(v,w)\left|a + 2p \wedge \pi_{V_{\theta}} (q) \right|^{1/2}. \]
Therefore 
\begin{multline} \label{setinc} \left\{ \theta \in [0,\pi) : d_{\mathbb{H}}\left( P_{\mathbb{V}^{\perp}_{\theta}}(v), P_{\mathbb{V}^{\perp}_{\theta}}(w) \right) \leq \delta \right\} \\
\subseteq \left\{ \theta \in [0,\pi) : \left|a + 2p \wedge \pi_{V_{\theta}}(q)\right| \leq \left(\frac{K\delta}{d_{\mathbb{H}}(v,w)}\right)^2 \right\}, \end{multline}
for a sufficiently large constant $K$. Define $F=F_{p,q}$ by
\[ F(\theta) = a + 2p \wedge \pi_{V_{\theta}}(q), \]
so that 
\[ F'(\theta) = 2p \wedge \partial_{\theta}\pi_{V_{\theta}}(q), \quad  F''(\theta) = 2p \wedge \partial_{\theta}^2\pi_{V_{\theta}}(q). \]
Using $\pi_{V_{\theta}}(q) = \langle q, e^{i \theta} \rangle e^{i \theta}$ gives 
\[ \partial_{\theta} \pi_{V_{\theta}}(q) = \langle q, ie^{i \theta} \rangle e^{i \theta} + \langle q, e^{i \theta} \rangle i e^{i \theta}, \quad \partial_{\theta}^2 \pi_{V_{\theta}}(q) = 2\left( \langle q, i e^{i \theta} \rangle i e^{i \theta} - \langle q, e^{i \theta} \rangle e^{i \theta} \right). \]
Therefore $\partial_{\theta} \pi_{V_{\theta}}(q)$ and $\frac{1}{2}\partial_{\theta}^2 \pi_{V_{\theta}}(q)$ are orthonormal vectors in $\mathbb{R}^2$, for each $\theta \in [0,\pi)$, and so
\begin{equation} \label{oscillation} 1 = |p|^2 =  \left|\frac{F'(\theta)}{2} \right|^2 + \left|\frac{ F''(\theta) }{4}\right|^2 \quad \text{for all } \theta \in [0,\pi). \end{equation}
It follows that for any $b \in \mathbb{R}$, the equation $F(\theta)=b$ has at most 2 solutions in any interval of length strictly less than $1/2$. To see this, let $I$ be an interval with $|I| < 1/2$ and assume for a contradiction that $F(\theta)=b$ has three distinct solutions in $I$. Then by Rolle's Theorem $F'$ has two distinct zeroes in $I$, and by Rolle's Theorem again $F''$ has a zero $\theta''$ in $I$. Let $\theta'$ be one of the zeroes of $F'$. Then by \eqref{oscillation},
\[ 2 = |F'(\theta'') | = \left|\int_{\theta'}^{\theta''} F''(\theta) \, d\theta \right| \leq 4|I| < 2, \]
which is a contradiction. 

By covering the interval $[0,\pi)$ with 7 intervals of length strictly less than 1/2, the equation $F(\theta)=b$ has at most 14 solutions in $[0,\pi)$, for any $b$, and therefore the second set in \eqref{setinc} is the disjoint union of at most 15 subintervals of $[0,\pi)$. Equation \eqref{oscillation} implies that $F'$ is $4$-Lipschitz, so by using $\left\lfloor 8\pi \right\rfloor =25$, these at most 15 intervals can be written as a union of at most $15+25 = 40$ disjoint intervals $I \subseteq [0,\pi)$, each of length at most 1/8, such that either $|F'(\theta)| >1/2$ for every $\theta \in I$, or $|F''(\theta)|>3$ for every $\theta \in I$. Lemma 3.3 from \cite{christ} asserts that each of these intervals has length $\lesssim \frac{\delta}{d_{\mathbb{H}}(v,w) }$, assuming $d_{\mathbb{H}}(v,w) \geq \delta$. If $d_{\mathbb{H}}(v,w) \leq \delta$ the lemma holds trivially provided $C>\pi$, so this finishes the proof. \end{proof}

\begin{lemma} \label{compromise} Fix $s \in \left( 2, 4\right]$, let $\nu$ be a nonzero finite compactly supported Borel measure on $\mathbb{H}$ with $\sup_{\substack{ v \in \mathbb{H} \\
r>0 }} \frac{ \nu(B_{\mathbb{H}}(v,r))}{r^s} < \infty$, and fix $\kappa >  \max\left\{\frac{s}{2}, \frac{3(s-1)}{4-s^{-1}} \right\}$ with $\kappa< s$. Then there exist $\delta_0, \eta>0 $ such that
\[ \nu\left\{ v \in \mathbb{H}: \mathcal{H}^1\left\{ \theta \in [0, \pi) : P_{\mathbb{V}^{\perp}_{\theta} \#}\nu\left(B_{\mathbb{H}}\left(P_{\mathbb{V}^{\perp}_{\theta}}(v), \delta\right) \right) \geq \delta^{s-\kappa}\right\} \geq \delta^{\eta} \right\} \leq \delta^{\eta}, \]
whenever $\delta \in (0, \delta_0)$.  
\end{lemma}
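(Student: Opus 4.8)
\emph{Step 1: reduction to a second moment.} The plan is to derive the lemma from a single energy estimate. Abbreviate $\pi_\theta=P_{\mathbb{V}^{\perp}_{\theta}}$, let
\[ G_\delta=\left\{v\in\mathbb{H}:\mathcal{H}^1\left\{\theta\in[0,\pi):\pi_{\theta\#}\nu\left(B_{\mathbb{H}}(\pi_\theta(v),\delta)\right)\geq\delta^{s-\kappa}\right\}\geq\delta^\eta\right\} \]
be the set whose measure is to be bounded, and set
\[ I_2=\int_0^\pi\int_{\mathbb{H}}\left(\pi_{\theta\#}\nu\left(B_{\mathbb{H}}(\pi_\theta(v),\delta)\right)\right)^2\,d\nu(v)\,d\theta. \]
If $\nu(G_\delta)>\delta^\eta$, then integrating the lower bound $\delta^{2(s-\kappa)}$ over the large angle-set attached to each $v\in G_\delta$ gives $I_2>\delta^{2\eta+2(s-\kappa)}$ (the measurability of the sets involved is checked exactly as in the proof of Lemma~\ref{abstract}). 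Since the hypothesis $\kappa>\max\{s/2,\,3(s-1)/(4-s^{-1})\}$ is, after simplification, precisely the inequality $2(s-\kappa)<M_0$ with
\[ M_0:=\min\left\{s,\ \frac{2s(s+2)}{4s-1}\right\}, \]
it therefore suffices to prove $I_2\lesssim\delta^{M_0}$ (a loss of an arbitrarily small power of $\delta$ is harmless, because $2(s-\kappa)<M_0$ is strict); choosing $\eta$ small then contradicts $\nu(G_\delta)>\delta^\eta$ for $\delta<\delta_0$.

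\emph{Step 2: expansion and the two geometric inputs.} Expanding the square writes $I_2$ as an integral over triples $v=(z,t)$, $w=(\zeta,\tau)$, $w'=(\zeta',\tau')$ of the $\theta$-measure of the set where $\pi_\theta(v)$ lies within $\delta$ of both $\pi_\theta(w)$ and $\pi_\theta(w')$. Two facts govern this quantity. First, Lemma~\ref{fasslerlemma} bounds each pair: the angles with $d_{\mathbb{H}}(\pi_\theta(v),\pi_\theta(w))\leq\delta$ form $O(1)$ intervals of total length $\lesssim\delta/d_{\mathbb{H}}(v,w)$. Second, this same closeness forces the $\theta$-\emph{independent} quantity $|t-\tau-2z\wedge\zeta|$ to be small: the term appearing in the second slot of $d_{\mathbb{H}}(\pi_\theta(v),\pi_\theta(w))$ in \eqref{secondcomp} differs from $t-\tau-2z\wedge\zeta$ by $2\pi_{V_{\theta}^{\perp}}(z-\zeta)\wedge\pi_{V_{\theta}}(z+\zeta)$, which is $\lesssim\delta|z+\zeta|\lesssim\delta$ once the first-coordinate bound $|\pi_{V_{\theta}^{\perp}}(z-\zeta)|\leq\delta$ and the compact support of $\nu$ are used, so $|t-\tau-2z\wedge\zeta|\lesssim\delta$. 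Hence, wherever the triple integrand is nonzero, $v$ is confined to the slab $\{\,|t-\tau-2z\wedge\zeta|\lesssim\delta,\ |t-\tau'-2z\wedge\zeta'|\lesssim\delta\,\}$; subtracting the two defining inequalities confines $z$ to a $\delta/|\zeta-\zeta'|$-neighbourhood of a line and then $t$ to a $\delta$-interval. This slab is the Heisenberg incarnation of the ``three points'' constraint, and it is where non-collinearity of $z,\zeta,\zeta'$ enters.

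\emph{Step 3: the three-circles substitute.} To profit from transversality between the two pairs I would, following the calculation in Lemma~\ref{fasslerlemma}, attach to $(v,w)$ and $(v,w')$ the functions $F_i(\theta)=a_i+2p_i\wedge\pi_{V_{\theta}}(q_i)$, where $p_i,q_i$ are the unit vectors along $z-\zeta_i$ and $z+\zeta_i$. By \eqref{oscillation} the oscillating part has unit amplitude, so $F_i(\theta)=a_i+\cos(2\theta-\psi_i)$, and by \eqref{setinc} the admissible angles for the triple lie in $\{|F_1|\lesssim(\delta/d_{\mathbb{H}}(v,w))^2\}\cap\{|F_2|\lesssim(\delta/d_{\mathbb{H}}(v,w'))^2\}$ (the degenerate cases of Lemma~\ref{fasslerlemma} are easier and handled separately). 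Quantitative non-collinearity of $z,\zeta,\zeta'$ translates into a lower bound on the phase gap $|\psi_1-\psi_2|$; the two unit sinusoids then cross transversally, and the measure of this joint sublevel set is much smaller than that of either factor. Bounding it by an elementary analysis of $a+\cos(2\theta-\psi)$, split according to whether each individual sublevel set is tangential ($|a|\approx1$) or transversal, is the quantitative replacement for Marstrand's Three Circles Lemma.

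\emph{Step 4: assembling the bound, and the main obstacle.} It remains to integrate the pointwise bounds against $\nu\times\nu\times\nu$. The input from $\nu$ is the Frostman condition $\nu(B_{\mathbb{H}}(\cdot,r))\lesssim r^s$ together with Proposition~\ref{covering}, which upgrades it to $\nu(B_E(\cdot,r))\lesssim r^{s-1}$ for Euclidean balls; since $s>2$ this beats the exponent $r^{s/2}$ available from Lemma~\ref{holder}, and it is this gain that yields the sharper exponents. I would organise the $(w,w')$-integration dyadically in the separation scale $\rho\sim|\zeta-\zeta'|$ (equivalently, the degree of non-collinearity): covering the slab of Step~2 by Korányi balls gives its $\nu$-measure as $\lesssim(\delta/\rho)^{s-2}$, the estimate of Step~3 supplies the $\theta$-measure, and Frostman controls the mass of $w'$ at scale $\rho$. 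The resulting geometric sum over $\rho\in(\delta,1)$ is dominated by the endpoint $\rho\sim1$ or $\rho\sim\delta$ according to the sign of an exponent that changes at $s=5/2$, producing the two regimes and the bound $I_2\lesssim\delta^{M_0}$. The main obstacle is exactly this curvature step: turning quantitative non-collinearity into a lower bound on $|\psi_1-\psi_2|$, proving the joint sublevel-set estimate with the correct power, and balancing it against the Frostman/covering input so that the dyadic sum closes at the two exponents $s/2$ and $3(s-1)/(4-s^{-1})$.
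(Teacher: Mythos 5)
Your Step 1 reduction is sound arithmetic: $2(s-\kappa)<M_0:=\min\left\{s,\ \tfrac{2s(s+2)}{4s-1}\right\}$ is exactly equivalent to $\kappa>\max\left\{\tfrac{s}{2},\tfrac{3(s-1)}{4-s^{-1}}\right\}$, so the lemma would indeed follow from $I_2\lesssim_{\epsilon}\delta^{M_0-\epsilon}$, and Step 2 correctly reproduces the paper's two geometric inputs (Lemma~\ref{fasslerlemma}, and the $\theta$-independence of the second Korányi component). But the proof effectively stops there: Steps 3 and 4 are statements of intent, and one claim on which Step 3 rests is false. Writing $F_i(\theta)=a_i+2p_i\wedge\pi_{V_{\theta}}(q_i)=a_i'+\sin\left(2\theta-\psi_i\right)$ with $\psi_i=\arg(z-\zeta_i)+\arg(z+\zeta_i)$, quantitative non-collinearity of $z,\zeta_1,\zeta_2$ does \emph{not} give a lower bound on the phase gap $|\psi_1-\psi_2|$: taking $\zeta_2=-\zeta_1$ (with suitable heights) swaps the roles of $z-\zeta$ and $z+\zeta$, so $\psi_1=\psi_2$ (and even $a_1'=a_2'$) while the three base points are in general position. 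Such configurations can only be killed by the first-component ("window") constraint $|\pi_{V_{\theta}^{\perp}}(z-\zeta_i)|\leq\delta$, which your sublevel-set analysis never invokes. Relatedly, your dyadic parameter conflates $|\zeta-\zeta'|$ with non-collinearity (the latter is the distance from $z$ to $\ell(\zeta,\zeta')$), and the covering bound $\nu(\text{slab})\lesssim(\delta/\rho)^{s-2}$ ignores the Heisenberg twist: how many Korányi balls a Euclidean tube needs depends on the transversality of its core line to the horizontal distribution, which is again exactly the non-collinearity, so the exponent fed into your dyadic sum is unjustified.

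The deeper, structural gap is that the second-moment route is strictly harder than what the paper needs, and nothing in the outline supplies the missing estimate. Expanding the square yields only two satellite points, so the constraints $|t-\tau_i-2z\wedge\zeta_i|\lesssim\delta$ pin $v$ to a tube about a line, whereas the paper deliberately manufactures \emph{three} satellites whose base points are quantitatively non-collinear, so that $\det DG$ is bounded below, $v$ is pinned in a Euclidean \emph{ball}, and Proposition~\ref{covering} applies. The paper can arrange this because its hypothesis (a $\delta^{\eta}$-set of points each bad for a $\delta^{\eta}$-set of angles) hands it three $\delta^{4\eta}$-separated angles $H_j'(v)$, hence non-collinearity for free. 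Your $I_2$ has no such mechanism: it integrates over \emph{all} levels of $f_{\theta}(v)=P_{\mathbb{V}^{\perp}_{\theta}\#}\nu\left(B_{\mathbb{H}}\left(P_{\mathbb{V}^{\perp}_{\theta}}(v),\delta\right)\right)$, including thresholds far above $\delta^{s-\kappa}$ that the lemma itself says nothing about, and its dominant contributions come precisely from aligned and tangential configurations — pairs with $|a_i'|\approx 1$, which are the only ones saturating Lemma~\ref{fasslerlemma} and which live near the lifted circles $\left\{\,||\zeta|-|z||\lesssim\delta,\ |\tau-t+2z\wedge\zeta|\lesssim\delta^2\right\}$ — where the joint sublevel set is as large as each factor. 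Controlling those is a Heisenberg analogue of Wolff's tangency counting, i.e.\ exactly the "substitute for the three circles lemma" whose absence the introduction identifies as the main difficulty; the paper's four-point configuration count exists precisely to avoid confronting it. So Steps 3--4 are not routine verifications but the core of the problem, and as sketched they would fail.
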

\begin{proof} Choose 
\begin{equation} \label{etachoice} \eta  = \frac{1}{10^4}
\min\left\{ \kappa-\frac{s}{2}, \kappa-\frac{3(s-1)}{4-s^{-1}}\right\}, \end{equation}
which is strictly positive by the assumption on $\kappa$. The choice of $\delta_0$ will be made implicitly to eliminate implicit constants and ensure that various trivial inequalities, such as $|\log \delta| \leq \delta^{-\eta}$, hold for $\delta< \delta_0$. Fix $\delta \in (0,\delta_0)$ and let 
\[ Z =Z_{\delta} =  \left\{ v \in \supp \nu: \mathcal{H}^1\left\{ \theta \in [0, \pi) : P_{\mathbb{V}^{\perp}_{\theta} \#}\nu\left(B_{\mathbb{H}}\left(P_{\mathbb{V}^{\perp}_{\theta}}(v), \delta\right) \right) \geq \delta^{s-\kappa}\right\} \geq \delta^{\eta} \right\}. \]
For each $v \in Z$, let 
\[ H(v) =  \left\{ \theta \in [0, \pi) : P_{\mathbb{V}^{\perp}_{\theta} \#}\nu\left(B_{\mathbb{H}}\left(P_{\mathbb{V}^{\perp}_{\theta}}(v), \delta\right) \right) \geq \delta^{s-\kappa}\right\}. \]
The number of integer powers of 2 between $\delta^{1-100\eta}$ and the Korányi diameter of $\supp \nu$ is $\lesssim \left|\log \delta\right|$. Hence by the Frostman condition on $\nu$, for each $\theta \in H(v)$ there exists a dyadic number $t=t(\theta,v)$ in this range such that 
\begin{equation} \label{tcond} \nu\left(P_{\mathbb{V}^{\perp}_{\theta}}^{-1}\left(B_{\mathbb{H}}\left(P_{\mathbb{V}^{\perp}_{\theta}}(v), \delta\right) \right) \cap A_{\mathbb{H}}(v,t,2t) \right) \gtrsim \delta^{s-\kappa} \left| \log \delta\right|^{-1}, \end{equation}
where $A_{\mathbb{H}}(v,r,R)$ denotes the Korányi annulus in $\mathbb{H}$ centred at $v$ with inner radius $r$ and outer radius $R$:
\[ A_{\mathbb{H}}(v,r,R) = \left\{ w \in \mathbb{H} : r \leq d_{\mathbb{H}}(v,w) \leq R \right\}. \]
The set $H(v)$ can be partitioned into sets $H(v,t)$ according to the dyadic values $t=t(\theta,v)$. Since again there are $\lesssim |\log \delta|$ values, at least one set $H(v,t(v))$ must have $\mathcal{H}^1$-measure $\gtrsim \delta^{\eta} |\log \delta|^{-1}$. The set $Z$ can be similarly partitioned according to the dyadic values $t(v)$, so that there is a subset $Z' \subseteq Z$ corresponding to a single dyadic value $t$, with $\nu(Z') \gtrsim \nu(Z) |\log \delta|^{-1}$. 

Since $\delta^{\eta}$ is much smaller than $|\log \delta|^{-1}$, this shows that there exists $t \geq \delta^{1-100\eta}$ with $t \lesssim 1$ and a subset $Z' \subseteq Z$ satisfying
\[ \nu(Z') \geq \delta^{\eta}\nu(Z), \]
and 
\begin{equation} \label{measureh} \mathcal{H}^1(H'(v)) \geq \delta^{2\eta} \quad \text{ for all $v \in Z'$,} \end{equation}
where $H'(v)$ is defined for any $v \in \mathbb{H}$ by
\begin{equation} \label{Hprimedefn} H'(v) = \left\{\theta \in [0,\pi)
: \nu\left(P_{\mathbb{V}^{\perp}_{\theta}}^{-1}\left(B_{\mathbb{H}}\left(P_{\mathbb{V}^{\perp}_{\theta}}(v), 2\delta\right) \right) \cap A_{\mathbb{H}}(v,t,2t) \right) \geq \delta^{s-\kappa+\eta} \right\}. \end{equation}
This construction can be done in a way that ensures that $Z'$ is $\nu$-measurable. By inner regularity of $\nu$, $Z'$ can also be taken to be compact. 

Fix $v \in Z'$. Using \eqref{measureh}, choose three subsets $H_j'(v) \subseteq H'(v)$ separated $\pmod \pi$ by a distance of at least $\delta^{4\eta}$ from each other, each contained in an interval of length $\delta^{4\eta}$, and each with 1-dimensional measure at least $\delta^{8\eta}$. This can be done by partitioning $[0,\pi)$ into $\lesssim \delta^{-4\eta}$ intervals of length $\delta^{4\eta}$, choosing the 6 intervals with the largest intersection with $H'(v)$ (in terms of $\mathcal{H}^1$-measure), and then choosing 3 with gaps between them $\pmod \pi$. By compactness of $Z'$, this construction can be modified to ensure that for each $j$, the sets $H_j'(v)$ are piecewise constant in $v$ over some disjoint Borel cover of $Z'$ (this last observation is only needed to avoid a measurability issue later, and is the reason for the factor of $2\delta$ in \eqref{Hprimedefn}). 

 For each $j=1,2,3$ and each $v \in Z'$ and $v_j \in \mathbb{H}$, define $v \sim_j v_j$ if 
\[ v_j \in P_{\mathbb{V}^{\perp}_{\theta}}^{-1}\left(B_{\mathbb{H}}\left(P_{\mathbb{V}^{\perp}_{\theta}}(v), 2\delta \right) \right) \cap A_{\mathbb{H}}(v,t,2t), \quad \text{for some } \theta \in H_j'(v). \] Set
\begin{equation} \label{alphadefn} \alpha =  \frac{s-\kappa + 1000\eta}{s} \in (0,1). \end{equation}
A major part of the rest of the proof will consist in verifying the following inequality:
\begin{multline} \label{claimed} \nu(Z)t^3 \delta^{1000 \eta + 3(s-\kappa-1)} \leq \nu^4\Big\{ (v,v_1,v_2,v_3) \in Z' \times \left(\mathbb{H}^1\right)^3 : v \sim_j v_j \text{ for all $j$,} \\
d_E\left(\zeta_2, \ell(\zeta_1,\zeta_3) \right) \geq \delta^{\alpha}  \text{ if } |\zeta-\zeta_1|, |\zeta-\zeta_3| \geq t/2 \Big\}  \\
\lesssim  \max\left\{ t^{2s} \delta^{s/2}, t^{1+s}  \delta^{(1-\alpha)(s-1)- 1000 \eta} \right\},  \end{multline}
where $d_E$ refers to the Euclidean distance, $\ell(a,b)$ means the infinite line through $a$ and $b$, $\nu^4 = \nu \times \nu \times \nu \times \nu$ and $v=(\zeta,\tau) \in \mathbb{C} \times \mathbb{R}$. The lemma will then be derived by comparing the two outer parts of \eqref{claimed}. The piecewise constant property of the sets $H_j'(v)$ ensures that the set in \eqref{claimed} is Borel measurable. 

\begin{sloppypar} To prove the lower bound of \eqref{claimed}, cover the interval $[0,\pi)$ with disjoint intervals of length $\delta/t$, and fix $v \in Z'$, $j \in \{1,2,3\}$. Since $\mathcal{H}^1(H_j'(v)) \geq \delta^{8\eta}$, there are at least $t\delta^{8\eta-1}$ intervals $I_k=I_{k,j}$ intersecting $H_j'(v)$, so pick some $\theta_k= \theta_{k,j}$ in each intersection. Then
\begin{equation} \label{prelemma} \nu\left(P_{\mathbb{V}^{\perp}_{\theta_k}}^{-1}\left(B_{\mathbb{H}}\left(P_{\mathbb{V}^{\perp}_{\theta_k}}(v), 2\delta\right) \right) \cap A_{\mathbb{H}}(v,t,2t)\right) \geq \delta^{s-\kappa+\eta}, \end{equation}
for each $k$, which follows from $H_j'(v) \subseteq H'(v)$ and the definition of $H'(v)$ in \eqref{Hprimedefn}. \end{sloppypar}

For fixed $v \in Z'$, $v_1,v_3 \in \mathbb{H}$ with $v \sim_1 v_1$, $v \sim_3 v_3$, and $\frac{t}{2} \leq |\zeta-\zeta_1|, |\zeta-\zeta_3| \leq 2t$, it will be shown that
\begin{multline} \label{prelemma2} \nu\bigg\{ v_2 \in P_{\mathbb{V}^{\perp}_{\theta_k}}^{-1}\left(B_{\mathbb{H}}\left(P_{\mathbb{V}^{\perp}_{\theta_k}}(v), 2\delta\right) \right) \cap A_{\mathbb{H}}(v,t,2t) : \\
 d_E\left(\zeta_2, \ell(\zeta_1,\zeta_3) \right) \geq \delta^{\alpha} \bigg\} \geq \delta^{s-\kappa+2\eta}. \end{multline}
This will follow from \eqref{prelemma} combined with the claim that for 
\[ v \in Z', \quad v \sim_1 v_1, \quad v \sim_3 v_3, \quad \frac{t}{2} \leq |\zeta-\zeta_1|, |\zeta-\zeta_3| \leq 2t, \]
 the set
\[ E:= \left\{ v_2 \in  P_{\mathbb{V}^{\perp}_{\theta_k}}^{-1}\left(B_{\mathbb{H}}\left(P_{\mathbb{V}^{\perp}_{\theta_k}}(v),2 \delta\right) \right) \cap A_{\mathbb{H}}(v,t,2t)  : d_E\left(\zeta_2, \ell(\zeta_1,\zeta_3) \right) < \delta^{\alpha} \right\}, \]
 is contained in a Korányi ball of radius $\delta^{\alpha-100\eta}$, thereby contributing $\lesssim \delta^{(\alpha-100\eta) s}$ to the measure in \eqref{prelemma}, which is much smaller than $\delta^{s-\kappa+2\eta}$ by the definition of $\alpha$ in \eqref{alphadefn}. To prove that $E$ is contained in the required Korányi ball, it will first be shown that the projected set
\begin{multline*}  F:= \Big\{ \zeta_2 \in \mathbb{R}^2: (\zeta_2,\tau_2) \in A_{\mathbb{H}}(v,t,2t) \text{ for some } \tau_2 \in \mathbb{R}: \\
\left|\pi_{V_{\theta_k}^{\perp}}(\zeta_2-\zeta)\right| < 2\delta, \quad d_E(\zeta_2, \ell(\zeta_1,\zeta_3) ) < \delta^{\alpha} \Big\}, \end{multline*}
is contained in a Euclidean disc in $\mathbb{R}^2$ of radius $\delta^{\alpha-50\eta}$. To see this, fix $\zeta_2 \in F$ with $v_2 =(\zeta_2,\tau_2)$. Define
\[ \ell(\theta_k) = \{ \zeta+\lambda e^{i \theta_k} : \lambda \in \mathbb{R} \}, \]
so that by $\alpha < 1$ and the definition of $F$, 
\begin{equation} \label{Fcontain} F \subseteq \mathcal{N}_{\delta^{\alpha}}(\ell(\zeta_1,\zeta_3))  \cap \mathcal{N}_{\delta^{\alpha}}(\ell(\theta_k)). \end{equation}
 It suffices to contain the right hand side of \eqref{Fcontain} in a disc of radius $\delta^{\alpha-50\eta}$. This will be done by establishing a lower bound on the angle $\theta$ between the two lines. Let $\phi_1 \in H'_1(v)$ and $\phi_3 \in H'_3(v)$ be such that $\left|\pi_{V_{\phi_j}^{\perp}}(\zeta-\zeta_j)\right| < 2 \delta$ with $j \in \{1,3\}$, so that  $\phi_1$ and $\phi_3$ are $\delta^{4\eta}$-separated $\pmod \pi$, by construction of the sets $H_j'(v)$. Write
\[ \zeta_2 = \lambda_1 \zeta_1 + (1-\lambda_1)\zeta_3 + \lambda_2 i (\zeta_1 - \zeta_3), \quad \lambda_1, \lambda_2 \in \mathbb{R} \quad \text{with} \quad |\lambda_2||\zeta_1-\zeta_3| < \delta^{\alpha}. \]
If $x$ and $y$ are unit vectors in $\mathbb{R}^2$, the angle $\theta$ between them satisfies $\left|\sin \theta\right| = |x \wedge y |$. Moreover, \eqref{tcond} implies that $t \geq \delta^{\alpha-20\eta}$ by \eqref{alphadefn} and the Frostman condition on $\nu$. Therefore
\begin{align} \notag \left| (\zeta_1 - \zeta_3) \wedge \pi_{V_{\theta_k}}(\zeta_2-\zeta) \right| &\geq \left| (\zeta_1 - \zeta_3) \wedge (\zeta_2-\zeta) \right| - 8t\delta  \\
\label{wedgebound1} &\geq \left| (\zeta_1-\zeta) \wedge (\zeta_3-\zeta) \right| - t(4\delta^{\alpha}+8\delta)  \\
\notag &\geq \left| \pi_{V_{\phi_1}}(\zeta_1-\zeta) \wedge \pi_{V_{\phi_3}}(\zeta_3-\zeta) \right| - t(4\delta^{\alpha}+16\delta)  \\
\notag &\geq \frac{t^2}{16} \left|\sin(\phi_3 - \phi_1)\right| - t(4\delta^{\alpha}+16\delta) \\
\label{wedgebound2} &\geq \frac{ t^2\delta^{4\eta}}{32} - t(4\delta^{\alpha} + 16 \delta) \\
\notag &\geq t^2 \delta^{5 \eta}. \end{align} 
Hence the angle $\theta$ between the two lines in \eqref{Fcontain} satisfies
\[ \left|\sin \theta\right| =  \left| \frac{(\zeta_1 - \zeta_3)}{|\zeta_1 - \zeta_3|}  \wedge \frac{ \pi_{V_{\theta_k}}(\zeta_2-\zeta)}{\big| \pi_{V_{\theta_k}}(\zeta_2-\zeta) \big|} \right| \geq \delta^{6\eta}.  \]
It follows that the intersection
\[ \mathcal{N}_{\delta^{\alpha}}(\ell(\zeta_1,\zeta_3))  \cap \mathcal{N}_{\delta^{\alpha}}(\ell(\theta_k)), \]
and therefore $F$, is contained in a Euclidean disc of radius $\delta^{\alpha-50\eta}$ inside $\mathbb{R}^2$. 

For the set $E$, let $v_2,v_2' \in E$. By the triangle inequality for the Korányi metric,
\[ d_{\mathbb{H}}\left(P_{\mathbb{V}^{\perp}_{\theta_k}} (v_2), P_{\mathbb{V}^{\perp}_{\theta_k}}(v_2') \right) \leq 4\delta. \]
Considering each component of the Korányi distance separately gives
\[ \left| \pi_{V_{\theta_k}^{\perp}}(\zeta_2-\zeta_2') \right| \leq 4\delta, \]
and
\[ \left| \tau_2 - \tau_2' - 2 \pi_{V_{\theta_k}}(\zeta_2) \wedge \pi_{V_{\theta_k}^{\perp}}(\zeta_2) +  2 \pi_{V_{\theta_k}}(\zeta_2') \wedge \pi_{V_{\theta_k}^{\perp}}(\zeta_2') \right| \leq 16\delta^2. \]
By the identity $\zeta_2 \wedge \zeta_2' = \pi_{V_{\theta_k}}(\zeta_2) \wedge \pi_{V_{\theta_k}^{\perp}}(\zeta_2')- \pi_{V_{\theta_k}}(\zeta_2') \wedge \pi_{V_{\theta_k}^{\perp}}(\zeta_2)$ and the two preceding inequalities,
\begin{equation} \label{wedge} |\tau_2 - \tau_2' - 2 \zeta_2 \wedge \zeta_2' | \lesssim \delta. \end{equation} 
The Euclidean projection of $E$ down to $\mathbb{R}^2$ is contained in $F$, and therefore in a ball of radius $\delta^{\alpha-50\eta}$. Combining this with \eqref{wedge} results in
\begin{align*}  d_{\mathbb{H}}(v_2, v_2') &= \left(  |\zeta_2-\zeta_2'|^4 + |\tau_2-\tau_2' - 2\zeta_2 \wedge \zeta_2'|^2 \right)^{1/4} \\
\notag &\lesssim  \delta^{\alpha-50\eta}  + \delta^{1/2} \\
\notag &\lesssim \delta^{ \alpha - 50 \eta}, \end{align*}
for any $v_2, v_2' \in E$, by the definition of $\alpha$ in \eqref{alphadefn} and the choice of $\eta$ in \eqref{etachoice}. This shows that the Korányi diameter of $E$ is $\lesssim \delta^{\alpha-50\eta}$, and thus $E$ is contained in a Korányi ball of radius $\delta^{\alpha-100\eta}$. This implies \eqref{prelemma2} by \eqref{prelemma}, the Frostman condition on $\nu$ and the definition of $\alpha$. 

For each $j$ and each $v \in Z'$, Lemma~\ref{fasslerlemma} ensures that each set in \eqref{prelemma} intersects $\lesssim 1$ of the others, and therefore summing \eqref{prelemma} over $k$ gives
\begin{equation} \label{anyj} \nu\left\{ v_j \in \mathbb{H}: v \sim_j v_j \right\} \gtrsim t \delta^{100 \eta + s-\kappa-1}. \end{equation}
Similarly, if $v \sim_1 v_1$ and $v \sim_3 v_3$, summing \eqref{prelemma2} over $k$ gives
\begin{multline} \label{j=2} \nu\left\{ v_2 \in \mathbb{H}: v \sim_2 v_2 :  d_E\left(\zeta_2, \ell(\zeta_1,\zeta_3) \right) \geq \delta^{\alpha} \text{ if } |\zeta-\zeta_1|, |\zeta-\zeta_3| \geq t/2 \right\} \\
\gtrsim t \delta^{100 \eta + s-\kappa-1}. \end{multline}

Using \eqref{anyj}, \eqref{j=2} and Fubini's Theorem yields
\begin{align*} &\nu^4\Big\{ (v,v_1,v_2,v_3) \in Z' \times \left(\mathbb{H}^1\right)^3 : v \sim_j v_j \text{ for all $j$,} \\
&\qquad  d_E\left(\zeta_2, \ell(\zeta_1,\zeta_3) \right) \geq \delta^{\alpha} \text{ if } |\zeta-\zeta_1|, |\zeta-\zeta_3| \geq t/2 \Big\}  \\
&= \int\limits_{Z'
} \int\limits_{\{v_1: v \sim_1 v_1\}} \int\limits_{\{v_3: v \sim_3 v_3\}} \int\limits_{\substack{ \{v_2: v \sim_2 v_2 \text{ and} \\
d_E(\zeta_2 ,\ell(\zeta_1,\zeta_3)) \geq \delta^{\alpha} \text{ if} \\ |\zeta-\zeta_1|, |\zeta-\zeta_3| \geq t/2\}}} \, d\nu^4(v_2,v_3,v_1,v) \\
&\gtrsim \nu(Z') \left(t \delta^{100 \eta + s-\kappa-1} \right)^3 \\
&\geq \nu(Z) t^3 \delta^{301 \eta + 3(s-\kappa-1)}, \end{align*}
which implies the lower bound of \eqref{claimed}. 

For $v_1, v_2, v_3 \in \mathbb{H}$, let 
\begin{multline*} A=A(v_1,v_2,v_3) \\
:= \{ v \in Z' : v \sim_j v_j \text{ for all $j$, } d_E(\zeta_2, \ell(\zeta_1,\zeta_3)) \geq \delta^{\alpha} \text{ if } |\zeta-\zeta_1|, |\zeta-\zeta_3| \geq t/2\}. \end{multline*}
The upper bound of \eqref{claimed} will be obtained by bounding $\nu(A)$ and then integrating over $v_1,v_2,v_3$. 

 Fix $v \in A$. For each $j \in \{1,2,3\}$, the inequality
\[ d_{\mathbb{H}}\left(P_{\mathbb{V}^{\perp}_{\theta}} (v), P_{\mathbb{V}^{\perp}_{\theta}}(v_j) \right) \leq 2\delta \quad \text{for some } \theta \in H_j'(v), \]
 implies
\begin{equation} \label{wedge2} |\tau - \tau_j - 2 \zeta \wedge \zeta_j | \lesssim \delta, \end{equation}
by a calculation similar to the derivation of \eqref{wedge}. Hence if $\left|\tau-\tau_j - 2\zeta \wedge \zeta_j \right|^{1/2} \geq |\zeta-\zeta_j|$ for some $j \in \{1,2,3\}$, then $d_{\mathbb{H}}(v,v_j) \lesssim \delta^{1/2}$ since \eqref{wedge2} corresponds to the second component of the Korányi distance; see \eqref{secondcomp}. Therefore
\begin{equation} \label{firstcase} \nu\{ v \in A:  \left|\tau-\tau_j - 2\zeta \wedge \zeta_j \right|^{1/2} \geq |\zeta-\zeta_j| \text{ for some } j \in \{1,2,3\} \} \lesssim \delta^{s/2}. \end{equation}

\begin{sloppypar} It remains to bound $\nu(A')$, where 
\[ A' = \{ v \in  A: \left|\tau-\tau_j - 2\zeta \wedge \zeta_j \right|^{1/2} < |\zeta-\zeta_j| \text{ for all } j \in \{1,2,3\} \}. \]
Define $G: \mathbb{H} \to \mathbb{R}^3$ by
\[ G(\zeta,\tau) = \begin{pmatrix} \tau-\tau_1 -2\zeta \wedge \zeta_1  \\
\tau-\tau_2 -2\zeta \wedge \zeta_2\\
\tau-\tau_3 -2\zeta \wedge \zeta_3\end{pmatrix}, \quad \text{so} \quad  DG(\zeta,\tau) = DG = \begin{pmatrix} -2y_1 & 2x_1 & 1 \\
-2y_2 & 2x_2 & 1 \\
-2y_3 & 2x_3 & 1 \end{pmatrix}, \]
where $\zeta_j = x_j + i y_j$. Then $A' \subseteq G^{-1}(B_E(0,C\delta))$ for some constant $C$, by \eqref{wedge2}. If $v \in A'$, then $\frac{t}{2} \leq |\zeta-\zeta_j| \leq 2t$ for each $j \in \{1,2,3\}$ by definition of the Korányi metric. Hence if $A'$ is nonempty and there exists $v_0 \in A'$, then by the condition $d_E(\zeta_2, \ell(\zeta_1,\zeta_3)) \geq \delta^{\alpha}$ in the definition of $A$,
\[\zeta_2 = \zeta_3 + \lambda_1(\zeta_1-\zeta_3) + \lambda_2 i(\zeta_1-\zeta_3), \quad \lambda_1, \lambda_2 \in \mathbb{R} \quad \text{with} \quad |\lambda_2| |\zeta_1-\zeta_3| \geq \delta^{\alpha}. \]
The inequality $\left| (\zeta_1-\zeta) \wedge (\zeta_3-\zeta) \right| \gtrsim t^2\delta^{4\eta}$ follows similarly to the working from \eqref{wedgebound1} to \eqref{wedgebound2}, and this gives
\[  \left| (\zeta_1-\zeta) \wedge (\zeta_3-\zeta) \right| \gtrsim \delta^{4\eta} |\zeta_1 -\zeta||\zeta_3-\zeta|. \]
Using the identity $|z|^2 |w|^2 = |\langle z,w \rangle |^2 + |z \wedge w|^2$ for $z,w \in \mathbb{R}^2$ and expanding out $|(\zeta_1-\zeta)-(\zeta_3-\zeta)|^2$ gives $|\zeta_1-\zeta_3|^2 \geq t^2\delta^{10\eta}$. Hence
\begin{align*} \left|\det DG\right| &= 4|\zeta_1 \wedge \zeta_2 + \zeta_2 \wedge \zeta_3 + \zeta_3 \wedge \zeta_1| \\
&= 4\left| (\zeta_1-\zeta_3) \wedge (\zeta_2 - \zeta_3) \right| \\
&= 4|\lambda_2||\zeta_1-\zeta_3|^2 \\
&\geq 4t\delta^{\alpha + 5 \eta}. \end{align*}
By combining this with the formula $(DG)^{-1} = \left(\det DG \right)^{-1} \adj DG$ for the inverse, where $\adj$ refers to the adjugate, the operator norm satisfies $\left\|(DG)^{-1}\right\| \lesssim t^{-1}\delta^{-\alpha-5\eta}$. Hence
\begin{equation} \label{euclidball} A' \subseteq G^{-1}(B_E(0,C\delta)) \subseteq B_E\left(v_0, C't^{-1} \delta^{1-\alpha-5\eta}\right). \end{equation}

The condition on $t$ in \eqref{tcond} implies $t \geq \delta^{1/2}$, and so the radius of the ball in \eqref{euclidball} is less than 1 by the definitions of $\eta$ and $\alpha$ in \eqref{etachoice} and \eqref{alphadefn}. Proposition \ref{covering} therefore implies that $A'$ can be covered by $\lesssim t\delta^{-(1-\alpha-5\eta)}$ Korányi balls of radius $t^{-1} \delta^{1-\alpha-5\eta}$. Hence by the Frostman condition on $\nu$,
\begin{equation} \label{nuprime} \nu(A') \lesssim t^{1-s}\delta^{(1-\alpha-5\eta)(s-1)}. \end{equation}
Combining \eqref{nuprime} with \eqref{firstcase} therefore yields
\[ \nu(A) \lesssim \max\left\{ \delta^{s/2},  t^{1-s}\delta^{(1-\alpha-5\eta)(s-1)} \right\}. \]
By the triangle inequality and Fubini,
\begin{align*} &\nu^4\Big\{ (v,v_1,v_2,v_3) \in Z' \times \left(\mathbb{H}^1\right)^3 : v \sim_j v_j \text{ for all $j$,} \\
&\qquad d_E\left(\zeta_2, \ell(\zeta_1,\zeta_3) \right) \geq \delta^{\alpha}  \text{ if } |\zeta-\zeta_1|, |\zeta-\zeta_3| \geq t/2\Big\}\\
 &= \int_{\mathbb{H}} \int_{B_{\mathbb{H}}(v_3,4t)} \int_{B_{\mathbb{H}}(v_3,4t)}  \nu(A(v_1,v_2,v_3)) \, d\nu(v_1) \, d\nu(v_2) \, d\nu(v_3) \\
&\lesssim  \max\left\{ t^{2s} \delta^{s/2},  t^{1+s}  \delta^{(1-\alpha-5\eta)(s-1)}\right\}. \end{align*}
This implies the upper bound in \eqref{claimed}, which finishes the proof of \eqref{claimed}.  \end{sloppypar}

The inequality $\delta^{s/2} \leq \delta^{(1-\alpha-5\eta)(s-1)}$ follows from the definition of $\alpha$ in \eqref{alphadefn} and the definition of $\kappa$; if $\kappa = \frac{s}{2}+\epsilon$ this is trivial, and if $\kappa = \frac{3(s-1)}{4- s^{-1}}+\epsilon$ it follows from the inequality 
\[ 2s^2 -11s + 6 <0 \quad \text{for $s \in [1,4]$, } \]
where $\epsilon$ is sufficiently small, in either case. Hence the second term in the upper bound of \eqref{claimed} is actually always the maximum. Therefore, comparing the lower and upper bounds from \eqref{claimed} gives
\[ \nu(Z)t^3 \delta^{1000 \eta + 3(s-\kappa-1)} \lesssim   t^{1+s}  \delta^{(1-\alpha)(s-1)-1000\eta}. \] 
Using $t \lesssim 1$, this simplifies to
\begin{align*} \nu(Z) &\leq \delta^{ (1-\alpha)(s-1) -3(s-\kappa-1) - 2001 \eta } \\
&\leq \delta^{\eta}\delta^{ \left( 4-s^{-1} \right)\left( \kappa - \frac{3(s-1)}{4-s^{-1}}\right) - 4000 \eta} &&\text{using $\alpha = \frac{s-\kappa+1000\eta}{s}$,} \\
&\leq \delta^{\eta}, \end{align*}
by the definition of $\eta$ in \eqref{etachoice}. This finishes the proof of the lemma. \end{proof}

Finally, the proof of the main theorem can be given by combining Lemma~\ref{abstract} and Lemma~\ref{compromise} with Lemma~\ref{frostman} (Frostman).
\begin{proof}[Proof of Theorem~\ref{maintheorem}] Let $A \subseteq \mathbb{H}$ be an analytic set with $s :=\dim A >2$ and let $\epsilon \in (0,s/2)$. By Lemma~\ref{frostman} (Frostman), there is a nonzero, finite, compactly supported Borel measure $\nu$ on $A$ with $\nu(B_{\mathbb{H}}(v,r)) \leq r^{s-\epsilon}$ for every $v \in \mathbb{H}$ and $r>0$. By Lemma~\ref{compromise} with 
\[ \kappa = \max\left\{\frac{s}{2}, \frac{3(s-1)}{4-s^{-1}} \right\}> \max\left\{\frac{s-\epsilon}{2}, \frac{3(s-\epsilon-1)}{4-(s-\epsilon)^{-1}}\right\}, \]
there exist $\delta_0, \eta>0 $ such that
\[ \nu\left\{ v \in \mathbb{H}: \mathcal{H}^1\left\{ \theta \in [0, \pi) : P_{\mathbb{V}^{\perp}_{\theta} \#}\nu\left(B_{\mathbb{H}}\left(P_{\mathbb{V}^{\perp}_{\theta}}(v), \delta\right) \right) \geq \delta^{s-\epsilon-\kappa}\right\} \geq \delta^{\eta} \right\} \leq \delta^{\eta}, \]
whenever $\delta \in (0, \delta_0)$. The set $[0,\pi)$ is a compact metric space when naturally identified with a circle, and the vertical projections are continuous with respect to this metric. Therefore Lemma~\ref{abstract} gives
\[ \dim P_{\mathbb{V}^{\perp}_{\theta}} A \geq \dim P_{\mathbb{V}^{\perp}_{\theta}}( \supp \nu) \geq s-\epsilon-\kappa \quad \text{ for a.e.~$\theta \in [0,\pi)$.} \] 
Letting $\epsilon \to 0$ results in
\[\dim P_{\mathbb{V}^{\perp}_{\theta}} A \geq s-\kappa =  \begin{cases} \frac{s}{2} &\text{ if } s \in \left(2, \frac{5}{2} \right] \\
\frac{ s(s + 2)}{4s -1} &\text{ if } s \in \left( \frac{5}{2} , 4 \right], \end{cases} \]
for a.e.~$\theta \in [0,\pi)$.  \end{proof}

\end{document}